\newlist{thematic}{itemize}{8}
\setlist[thematic]{label=$\square$}
\newtheorem{theorem}{Theorem}[section]
\newtheorem{proposition}[theorem]{Proposition}
\newtheorem{lemma}[theorem]{Lemma}
\newtheorem{corollary}[theorem]{Corollary}
\theoremstyle{definition}
\newtheorem{remark}[theorem]{Remark}
\newtheorem{example}[theorem]{Example}
\DeclareMathOperator{\Hom}{Hom}
\DeclareMathOperator{\Ext}{Ext}
\DeclareMathOperator{\Coh}{Coh}
\DeclareMathOperator{\ch}{ch}
\DeclareMathOperator{\Pic}{Pic}
\title{Bridgeland walls destabilizing one-dimensional space sheaves
}
\author{Daniel Bernal}
\address{IMECC - UNICAMP \\ Departamento de Matem\'atica \\
Rua S\'ergio  Buarque de Holanda, 651\\ 13083-970 Campinas-SP, Brazil}
\address{University of Edinburgh \\
Old College \\
South Bridge \\
Edinburgh \\
EH8 9YL}
\email{o235397@dac.unicamp.br}
\email{s2906019@ed.ac.uk}
\author[Cristian Martinez]{Cristian Martinez}
\address{Departamento de Matemáticas\\
Universidad de los Andes \\
Carrera 1 \# 18A-12\\
111711\\ Bogot\'a\\
Colombia}
\email{crm.martineze@uniandes.edu.co}
\date{}
\begin{document}

 \begin{abstract}
 Following the setup proposed in \cite{vertical2023} and \cite{instantons2023} in the case of the projective space, we study some numerical and actual Bridgeland walls for the (twisted) Chern character $v=(-R,0,D,0)$ in certain half-plane of stability conditions, where walls are nested and finite. We give two bounds for the largest numerical wall that may appear. When $R=0$, these bounds in particular produce the first known bounds for the Gieseker chamber in the case of a threefold. We also study the cases $R=0$ and $D=3,4$ using a small algorithm in Python.
\end{abstract}
\maketitle 

\setcounter{tocdepth}{1}

\tableofcontents

\section{Introduction}

Since the introduction of stability conditions by Bridgeland \cite{B07} and the first class of nontrivial examples on surfaces in \cite{B08} and \cite{AB}, they have become a powerful tool to study the geometry of moduli spaces of semistable sheaves. In particular, interpreting known moduli spaces and their birational transformations as moduli spaces of semistable objects for some stability conditions has opened up a new world of possibilities (see \cite{mmp13}, \cite{BMW}, \cite{BM14a}, \cite{martinez2017duality}, \cite{instantons2023} and \cite{DTPTjlmm} for examples of applications of these so called wall-crossing techniques).

\vspace{0.3cm}
\noindent
On surfaces, most of the applications listed above rely on the fact that it is possible to find a chamber in the stability manifold where the stable objects of a given Chern character are precisely the Gieseker semistable sheaves. Furthermore, precise bounds for this ``Gieseker chamber'' are given in \cite{lomartinez23}. However, in the case of threefolds, the only general construction for a family of stability conditions, due to Bayer, Macrì and Toda \cite{BMT}, is still conjectural and at the moment of writing only proven for a handful of cases: Fano threefolds of Picard rank 1 \cite{Li,M-p3,S-q3}, more general Fano threefolds \cite{BMSZ}, abelian threefolds \cite{MP,BMS}, quintic threefolds \cite{Li2}, general weighted hypersurfaces in either of the weighted projective spaces $\mathbb{P}(1,1,1,1,2)$ or $\mathbb{P}(1,1,1,1,4)$ \cite{KosDoubleTriple}, some product threefolds \cite{KOSproducts}, threefolds with nef tangent bundles \cite{KOSnef} and Calabi--Yau threefolds obtained as a complete intersection of quadric and quartic hypersurfaces in $\mathbb{P}^5$ \cite{LiuBGICY3}. In these cases, and when the Picard rank is 1, we have for every $\beta\in\mathbb{R}$, a 2-parameter family of stability conditions $\{\sigma_{\beta,\alpha,s}\}_{\alpha,s>0}$, each defined over a tilt of the tilted heart $\Coh^{\beta}(X)$.

\vspace{0.3cm}
\noindent
The double-tilted nature of the categories supporting stability conditions on the threefolds listed above makes finding the Gieseker chamber extremely difficult. Nevertheless, when $v$ is the Chern character of a Gieseker semistable sheaf of positive rank and the threefold has Picard rank 1, it was shown in \cite{DTPTjlmm} that for every $s>1/3$ there is a value of $\alpha>0$ (depending only on $v$) such that for every $\beta\ll 0$ the Bridgeland semistable objects of Chern character $v$ are precisely the Gieseker semistable sheaves. The same result when $X$ is abelian was obtained in \cite{Oberdieck}. When $v$ is the Chern character of a 1-dimensional Gieseker semistable sheaf, it was shown in \cite{vertical2023} that there is a specific value of $\beta$ (depending only on $v$) such that the Bridgeland semistable objects of Chern character $v$ for $\alpha\gg 0$ are precisely the Gieseker semistable sheaves. However, a bound for the Gieseker chamber (similar to the one in \cite{lomartinez23} for the case of surfaces) is still unknown.    

\vspace{0.3cm}
\noindent
In this paper, we show the existence of actual walls for objects with $\beta$-twisted Chern character $v=(-R,0,D,0)$ (where $R\geq 0,D>0$) in the $(\alpha,s)$-plane parameterizing the family $\{\sigma_{\beta,\alpha,s}\}_{\alpha,s>0}$ of Bridgeland stability conditions on $\mathbb{P}^3$. It was shown in \cite[Section 5]{vertical2023} that on this slice of stability conditions the walls are disjoint, finite, and can be parameterized by a value $\alpha_0>0$ (that can be written in terms of the third and first Chern characters of the destabilizing subobject).

\vspace{0.3cm}
\noindent
Moreover, we give bounds using similar methods as in \cite{schmidt3folds20} and \cite{lomartinez23} for the Gieseker chamber, thus giving the first of such bounds for the case of threefolds. More precisely, we prove the following:
\begin{theorem}
\label{thm1.1}
    Let $X=\mathbb{P}^3$ and $\ch^\beta= (0,0,D,0)$. Then 
    \begin{enumerate}
        \item For $\alpha_0>2D$ there are no numerical nor actual walls. In particular, this region is contained in the Gieseker chamber.
        \item If $\beta=0$ there are at least three actual walls  and one numerical wall in the stability manifold $\operatorname{Stab}(\mathbb{P}^3)$ and we have a sharper bound for the Gieseker chamber. Namely, for $\alpha_0>D$ there are no numerical (and thus actual) walls. 
    \end{enumerate}
\end{theorem}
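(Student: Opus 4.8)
The plan is to work directly with a potential destabilizing short exact sequence $0\to F\to E\to Q\to 0$ in the double-tilted heart $\mathcal A_{\beta,\alpha,s}$ on which $\sigma_{\beta,\alpha,s}$ is defined, and to translate the existence of a wall into numerical inequalities on $\ch^\beta(F)=(r,c,e,f)$. Since $\ch^\beta(E)=(0,0,D,0)$ has $\ch_1^\beta=\ch_3^\beta=\ch_0^\beta=0$, the central charge $Z_{\beta,\alpha,s}(E)$ is purely imaginary for every $(\alpha,s)$ (equal to $iD$ up to the normalization), so $E$ sits at phase $1/2$; consequently any wall forces $Z_{\beta,\alpha,s}(F)$ to be real-positively proportional to $Z_{\beta,\alpha,s}(E)$, i.e. $\Re Z_{\beta,\alpha,s}(F)=0$ together with $0<\Im Z_{\beta,\alpha,s}(F)<\Im Z_{\beta,\alpha,s}(E)$. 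Writing $\Re Z_{\beta,\alpha,s}(F)=-f+s\alpha^2 c$ and $\Im Z_{\beta,\alpha,s}(F)=e-\tfrac{\alpha^2}{2}r$, the wall equation is $f=s\alpha^2 c$ and the heart constraint is $0<e-\tfrac{\alpha^2}{2}r<D$. By \cite[Section 5]{vertical2023} such a wall is recorded by the invariant $\alpha_0=\alpha_0(c,f)$, so the whole statement reduces to bounding $\alpha_0$ over the classes $\ch^\beta(F)$ that can actually occur. Replacing $F$ and $Q$ by their tilt Harder--Narasimhan and Jordan--Hölder factors, I may assume both are $\nu_{\alpha,\beta}$-tilt-semistable, which is what makes the available inequalities applicable.

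For part (1) the engine is the pair of Bogomolov-type constraints. The classical inequality $\overline{\Delta}\ge 0$ is degenerate here, since $\overline{\Delta}(E)=(\ch_1^\beta)^2-2\ch_0^\beta\ch_2^\beta=0$, so the real control must come from the generalized Bogomolov--Gieseker inequality of \cite{BMT}, a theorem on $\mathbb P^3$ by \cite{M-p3}, applied to the tilt-semistable factors $F$ and $Q$: this bounds $\ch_3^\beta=f$ against $\ch_1^\beta=c$ and $\ch_2^\beta=e$. Feeding the wall equation and the heart constraint into these inequalities, and using that the same inequalities hold for $Q$ with $\ch^\beta(Q)=(-r,-c,D-e,-f)$, I expect the feasible region for $(r,c,e,f)$ to be compact in the relevant coordinates, with $\alpha_0(c,f)$ maximized at an explicit boundary configuration. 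The target is to show this maximum does not exceed $2D$, so that no numerical wall has $\alpha_0>2D$, i.e. the region $\alpha_0>2D$ is wall-free; since actual walls form a subset of the numerical walls, none exist there either. Containment in the Gieseker chamber then follows by combining this with the identification in \cite{vertical2023} of the $\alpha\gg 0$ semistable objects of class $(0,0,D,0)$ with the Gieseker-semistable sheaves, there being no wall in between along which the moduli could change.

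For part (2) I specialize to $\beta=0$, where $\ch^\beta=\ch$ takes values in the integral Chern lattice of $\mathbb P^3$. The sharper bound $\alpha_0>D$ is obtained by rerunning the optimization of part (1) with this integrality imposed: the near-extremal classes that would realize $D<\alpha_0\le 2D$ have non-integral invariants and are therefore excluded, which drops the effective bound to $D$. The existence of at least three actual walls, and of one purely numerical non-actual wall, I would establish by exhibiting explicit destabilizers: three small integral solutions $(r,c,e,f)$ of the wall equation and heart inequalities that are realized by genuine objects of $\mathcal A_{0,\alpha,s}$ --- for instance structure sheaves or twisted ideal sheaves of subcurves of the degree-$D$ support, together with their cohomology shifts --- and for each I would produce a nonzero map $F\to E$ (or $E\to Q$) and check that the slope inequality reverses across the corresponding $\alpha_0$. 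The remaining numerical class is then shown to admit no such map, so its wall is numerical only; this is precisely the bookkeeping the Python routine carries out for $D=3,4$.

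The main obstacle is the optimization in part (1): because $\overline{\Delta}(E)=0$, all quantitative control must be extracted from the generalized Bogomolov--Gieseker inequality together with the two wall and heart constraints, and showing that the supremum of $\alpha_0(c,f)$ over the feasible region is \emph{exactly} $2D$, respectively $D$ when $\beta=0$, requires pinning down the correct extremal configuration rather than settling for a loose estimate. The second, more delicate difficulty is upgrading numerical walls to actual ones in part (2): a numerical solution only produces a candidate class, and one must verify that a genuine subobject of the required class exists in the heart and destabilizes $E$ on precisely one side of the wall. This existence step, rather than the inequalities, is where the real work lies.
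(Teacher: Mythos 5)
Your proposal sets up the right framework (wall equation $\Re Z_{\beta,\alpha,s}(F)=0$, positivity of the imaginary parts, Bogomolov--Gieseker--type control on the tilt-semistable factors, explicit destabilizers for actuality), but it stops at the point where the actual work happens, and it misses the mechanism that makes part (1) a three-line argument rather than an optimization. In the paper, the bound $\alpha_0\le 2D$ does \emph{not} come from locating the maximum of $\alpha_0$ over a compact feasible region; it comes from a contradiction. Writing $\ch^\beta(A)=(r,c,d,e)$ and $\alpha_0^2=6e/c$, the hypothesis $\alpha_0>2D$ combined with \eqref{num1} and \eqref{num2} gives $4c^2d^2<4D^2c^2<c(6e)\le 4d^2$, hence $0<c<1$; and \eqref{num3} together with $\alpha_0^2>2D$ forces $r=0$, so $c=\ch_1(A)\in\mathbb{Z}$ --- contradiction. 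Note that integrality of $\ch_1$ is the crux of part (1) as well, not only of part (2) as your proposal suggests. Moreover, your plan only invokes the generalized BG inequality on the two factors, i.e.\ the lower bound $Q\ge 0$; the paper's condition \eqref{num3}, which is what pins down $r$ (and hence the integrality of $c$), comes from the \emph{upper} bound $Q_{\beta,\alpha,s}(A)\le Q_{\beta,\alpha,s}(E)$ in the support property along the wall. Without that input you have no control on the rank of the destabilizer and the argument does not close.

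For part (2) there are two further gaps. First, the sharper bound is obtained by explicitly identifying the extremal class $c=1$, $2d=D$, $6e=D^2-k$ with $k$ determined by the integral conditions, and the even-$D$ case requires a separate step (when $2d=D$ with $D$ even, \eqref{int1} forces $c$ even, so the maximum drops to $\alpha_\infty^{D-1}$); ``rerunning the optimization with integrality imposed'' does not by itself produce either the extremal configuration or this parity dichotomy. Second, you misread the clause ``one numerical wall'': the paper does not prove that some wall is numerical-only (i.e.\ admits no destabilizing map) --- that would be a much harder statement --- it merely exhibits a fourth numerical wall, coming from $\mathcal{O}_H(\lceil D/2\rceil)$, whose actuality is left open. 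The three actual walls are the explicit ones $\alpha_0=1,2,\sqrt{7}$ produced by $\mathcal{O}(1)$, $\mathcal{O}(2)\hookrightarrow\mathcal{O}_{\mathcal{E}}(2)$ and $\mathcal{O}_H(2)\hookrightarrow\mathcal{O}_C(2)$, propagated to all $D$ by adding a line ($\overline{\mathcal{M}}_{D,1}\subset\mathcal{M}_{(D+1)t}$); your proposal gestures at the right kind of objects but exhibits none, and verifying that the candidate triangles are genuine short exact sequences in $\mathcal{A}^{0,\alpha}$ (e.g.\ that $\mathcal{I}_{\mathcal{E}}(2)[1]$ lies in the heart on the relevant wall) is exactly the nontrivial step you correctly flag but do not carry out.
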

\noindent
Using this result, we then turn to the question of identifying walls in the cases when the asymptotic moduli space ($\alpha \gg 0$) is known. Using the descriptions of the Gieseker moduli given in  \cite{jardim2017moduli}, \cite{cubics2004}, and \cite{quartics2016}, we analyze in detail the cases $D=3,4$ and $\beta\neq 0$, where the asymptotic Bridgeland moduli spaces are indeed moduli spaces of 1-dimensional Gieseker semistable sheaves (see \cite[Proposition 5.3]{vertical2023}).

\vspace{0.3cm}
\noindent
For doing so, we work out the numerical conditions provided in \cite{vertical2023} (see Section 3.1) for the possible Chern characters of a destabilizing subobject. The numerical walls coming from these conditions are called \textit{pseudo-walls}, see \cite[Definition 4.19]{walls2022}. 

\vspace{0.3cm}
\noindent
When studying the cases with $\beta\neq 0$ the level of difficulty in finding solutions for our numerical conditions increases greatly. However, thanks to the implementation of a small algorithm in Python, we are able to prove the following:

\begin{theorem}\label{thm1.2}
    Let $X=\mathbb{P}^3$ and $\ch^{\beta}=(0,0,D,0)$ with $D=3,4$, then the boundary of the Gieseker chamber in the $(\alpha,s)$-slice destabilizes plane sheaves. Moreover, the corresponding destabilizing sequences are pushforwards of destabilizing sequences for tilt stability on $\mathbb{P}^2$.
\end{theorem}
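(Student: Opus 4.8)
The plan is to reduce the statement to a finite computation via Theorem~\ref{thm1.1} and then to match the outermost wall with the pushforward of a tilt wall on $\mathbb{P}^2$. First I would use Theorem~\ref{thm1.1}(1) to confine every numerical, and hence every actual, wall to the region $\alpha_0\le 2D$. Since the walls on this slice are nested and finite by \cite[Section~5]{vertical2023}, the boundary of the Gieseker chamber is the outermost actual wall, and it is witnessed by one of only finitely many candidate Chern characters $w=(\ch_0',\ch_1',\ch_2',\ch_3')$ of a destabilizing subobject. For $D=3,4$ I would run the Python implementation of the numerical conditions of Section~3.1 (drawn from \cite{vertical2023}) to produce the complete list of pseudo-walls in this region, together with their potential destabilizing classes, and single out the class $w$ of maximal $\alpha_0$.

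Next I would set up the pushforward dictionary. Let $i\colon\mathbb{P}^2\hookrightarrow\mathbb{P}^3$ be a linear embedding, with normal bundle $\mathcal{O}_{\mathbb{P}^2}(1)$. Grothendieck--Riemann--Roch yields an explicit injective map $i_*$ on Chern characters under which a rank-zero sheaf $\mathcal{F}$ of degree $D$ on $\mathbb{P}^2$ acquires a class of the shape $(0,0,D,\ast)$; matching the last coordinate against $\ch^\beta=(0,0,D,0)$ fixes $\ch_3$ and exhibits the object under study as $i_*\mathcal{F}$ for a $1$-dimensional sheaf $\mathcal{F}$ on $\mathbb{P}^2$. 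The decisive numerical check is that the maximal destabilizing class $w$ found above equals $i_*(w')$ for the Chern character $w'$ of a tilt-destabilizing subobject of $\mathcal{F}$; here the equal-slope equation cutting out the wall on the $(\alpha,s)$-slice should pull back under $i_*$ to the equal-tilt-slope equation cutting out the corresponding wall on $\mathbb{P}^2$, with the twist $\beta$ matching the tilt parameter.

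It then remains to upgrade this numerical coincidence to an actual sequence. I would take the tilt-destabilizing short exact sequence $0\to\mathcal{G}\to\mathcal{F}\to\mathcal{Q}\to 0$ in the tilted heart on $\mathbb{P}^2$ and apply $i_*$. Because $i$ is a closed immersion, $i_*$ is exact on coherent sheaves, so the result is a triangle on $\mathbb{P}^3$ whose terms have Chern characters $i_*w'$, $v$, and $v-i_*w'$. I would then verify that $i_*\mathcal{G}$ and $i_*\mathcal{Q}$ lie in the double-tilted Bridgeland heart and have equal phase precisely along the numerically identified wall, so that the sequence genuinely destabilizes $i_*\mathcal{F}$ there; by nestedness and finiteness this wall must be the Gieseker-chamber boundary. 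Feeding in the explicit descriptions of the Gieseker moduli from \cite{jardim2017moduli}, \cite{cubics2004} and \cite{quartics2016} for $D=3,4$ then identifies the destabilized objects as exactly the plane sheaves.

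The main obstacle is this last step: showing that $i_*$ is compatible with the two t-structures and carries tilt-(semi)stable objects to Bridgeland-(semi)stable objects of equal phase, rather than merely matching Chern characters. Since $i_*$ is exact as a functor of sheaves but need not respect the tilts a priori, I expect to argue it by hand for the finitely many destabilizers arising when $D=3,4$: checking that $\mathcal{G}$ and $\mathcal{Q}$ land in the subcategory on which $i_*$ preserves the heart, and then invoking the slope identity of the second step to conclude that the pushed-forward sequence equalizes phases exactly on the wall.
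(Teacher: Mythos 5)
Your proposal follows essentially the same route as the paper: enumerate the pseudo-walls for $D=3,4$ with the algorithm implementing the numerical and integral conditions, identify the outermost one, and realize it as an actual wall by pushing forward a tilt-destabilizing sequence of a plane sheaf from $\mathbb{P}^2$ (the paper's Remark \ref{p2p3correspondence} is exactly your slope dictionary, and its Lemma \ref{lemma:familiaarobjects} carries out the by-hand heart-membership checks you flag as the main obstacle). The only cosmetic difference is that you bound the search region via Theorem \ref{thm1.1}(1), whereas the paper also uses the Beilinson-type resolutions of \cite{cubics2004} and \cite{quartics2016} to cut off walls from below via Lemmas \ref{lemma5.2} and \ref{regionforbeta1:4}; this does not change the substance of the argument.
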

\noindent
Section \ref{sec:prelim} contains some preliminaries on the Bayer, Macrì and Toda construction of Bridgeland stability conditions on threefolds along with the numerical constraints for the existence of walls given in \cite{vertical2023} and some results on the moduli space of Gieseker semistable sheaves with Hilbert polynomial $Dt$ given in \cite{jardim2017moduli}. In Section \ref{sec:wallsandnumcond}, we study some actual walls that appear when $\beta=0$ and give the numerical conditions for pseudo-walls in the $\beta \neq 0$ case. In Section \ref{sec:Giesekerwall}, we give bounds for the maximum numerical wall in the $(\alpha,s)$-slice of $\operatorname{Stab}(\mathbb{P}^3)$, which for the special case $R=0$ bounds the Gieseker chamber. In Section \ref{sec:examples}, we use the results from Sections \ref{sec:wallsandnumcond} and \ref{sec:Giesekerwall} to study the cases of $D=3$ and $D=4$. Lastly, in the Appendices we give the \href{https://github.com/daniel-bernalm/walls_computations}{code} in Python used to compute the numerical walls for the cases $\beta=0$ or $\beta^{-1}\in\mathbb{Z}$.

\subsection*{Acknowledgments}
DB would like to thank his advisor Marcos Jardim for the suggestion on the topics, guidance on the manuscript and the support offered. This project was partially developed at the University of Edinburgh to which DB is grateful for the hospitality and support. The authors would also like to express their gratitude to Antony Maciocia and Jason Lo for the comments and suggestions on earlier versions of this text. DB is supported by the Coordenação de Aperfeiçoamento de Pessoal de Nível Superior - Brasil (CAPES) - Finance Code 001 - Process Number 88887.015001/2024-00, by the CAPES-PRINT Program - Call no. 41/2017 - Process Number 88887.975395/2024-00 and by the CAPES-PDSE Program - Process Number 88881.128062/2025-01.


\section{Preliminaries}\label{sec:prelim}
\subsection{Notation} Other than specified, we will use the following notation:
\begin{itemize}
    \item $X$ will denote a threefold of Picard rank 1 for which the Bayer-Macrì-Toda construction of Bridgeland stability conditions works, e.g., $X$ is Fano or abelian, and $H$ will denote the ample generator of $\Pic(X)$.
    \item $D^b(X)$ will denote the bounded derived category of coherent sheaves on $X$.
    \item For $\beta\in\mathbb{R}$, the twisted Chern character of an object $A\in D^b(X)$ is
    $$
    \ch^{\beta}(A):=\exp(-\beta H)\ch(A)=\ch_0^{\beta}(A)+\ch_1^{\beta}(A)H+\ch_2^{\beta}(A)H^2+\ch_3^{\beta}(A)H^3.
    $$
    We then identify $\ch^{\beta}(A)$ with the numerical vector $(\ch_0^{\beta}(A),\ch_1^{\beta}(A),\ch_2^{\beta}(A),\ch_3^{\beta}(A))$. A short computation shows that
    \begin{align*}
        \ch_0^{\beta}(A)&=\ch_0(A),\\
        \ch_1^{\beta}(A)&=\frac{\ch_1(A)H^2}{H^3}-\beta\ch_0(A)\\
        \ch_2^{\beta}(A)&=\frac{\ch_2(A)H}{H^3}-\beta \frac{\ch_1(A)H^2}{H^3}+\frac{\beta^2}{2}\ch_0(A)\\
        \ch_3^{\beta}(A)&=\frac{\ch_3(A)}{H^3}-\beta\frac{\ch_2(A)H}{H^3}+\frac{\beta^2}{2}\frac{\ch_1(A)H^2}{H^3}-\frac{\beta^3}{6}\ch_0(A).
    \end{align*}
    \item If $\mathcal{A}$ is the heart of a bounded t-structure on $D^b(X)$, we denote the cohomologies of an object $E^{\bullet}\in D^b(X)$ with respect to $\mathcal{A}$ by $\mathcal{H}^j_{\mathcal{A}}(E^{\bullet})$, while the cohomologies with respect to the standard t-structure $\Coh(X)\subset D^b(X)$ will be denoted by $\mathcal{H}^j(E^{\bullet})$.
\end{itemize}
\subsection{The double-tilt construction on threefolds}
We assume that the reader is familiar with the basic concepts of Bridgeland stability conditions, as in the topics covered by \cite{macrischmidt19}. For the case of threefolds, the double-tilt construction of Bridgeland stability conditions due to Bayer, Macrì and Toda \cite{BMT} is as follows: 

\begin{enumerate}
    \item Let $\beta\in\mathbb{R}$ and start by considering the \textbf{(twisted) Mumford slope}
$$
\mu_{\beta} (A) =
\begin{dcases}
\frac{\ch^{\beta}_1(A)}{\ch^{\beta}_0 (A)} & \textnormal{ if } \ch^{\beta}_0(A)\neq 0 \\
+\infty & \textnormal{ if } \ch^{\beta}_0(A) = 0; 
\end{dcases}
$$
because of the Harder-Narasimhan property, the subcategories
\begin{gather*}
\mathcal{T}_\beta =  \{
E \in \Coh(X) \, | \, \mu_\beta (Q) > 0 \textnormal{ for every quotient } E \twoheadrightarrow Q 
\} \\
\mathcal{F}_\beta =  \{
E \in \Coh(X) \, | \, \mu_\beta (F) \leq 0 \textnormal{ for every subsheaf } F \hookrightarrow E 
\}    
\end{gather*}
form a torsion pair. Thus, the tilted category $\Coh^{\beta} (X) := \langle \mathcal{F}_{\beta}[1], \mathcal{T}_{\beta} \rangle$ is the heart of a bounded t-structure on $D^b(X)$.

\item For $\alpha>0$, in $\Coh^{\beta} (X)$ we can define the \textbf{tilt-slope} function
$$
\nu_{\beta,\alpha} (A):= 
\begin{dcases}
    \frac{\ch^{\beta}_2 (A) - \frac{\alpha^2}{2} \ch^{\beta}_0(A)}{ \ch^{\beta}_1(A)} & \textnormal{ if } \ch^{\beta}_1(A) \neq 0 \\
    + \infty & \textnormal{ if } \ch^{\beta}_1(A) = 0,
\end{dcases}
$$
which again turns out to have the Harder-Narasimhan property, i.e., each object in $\Coh^{\beta}(X)$ has a unique filtration whose factors are tilt-semistable and of decreasing tilt-slopes. Therefore, the following subcategories
\begin{gather*}
    \mathcal{T}_{\beta,\alpha} =  \{
    E \in \Coh^{\beta}(X) \, | \, \nu_{\beta,\alpha} (Q) > 0 \textnormal{ for every quotient } E \twoheadrightarrow Q \textnormal{ in } \Coh^{\beta}(X)
    \} \\
    \mathcal{F}_{\beta,\alpha} =  \{
    E \in \Coh^{\beta}(X) \, | \, \nu_{\beta,\alpha} (F) \leq 0 \textnormal{ for every subobject } F \hookrightarrow E \textnormal{ in } \Coh^{\beta}(X) 
    \}    
\end{gather*}
form again a torsion pair. Thus, the tilted category $\mathcal{A}^{\beta,\alpha}= \langle \mathcal{F}_{\beta,\alpha}[1], \mathcal{T}_{\beta, \alpha} \rangle $ is again the heart of bounded t-structure on $D^b(X)$.

\item In all the threefolds listed in the introduction (e.g., Fano or abelian), the heart $\mathcal{A}^{\beta,\alpha}$ supports a $1$-parameter family of central charges
$$
Z_{\beta,\alpha,s}(A)=-\left( \ch_3^\beta (A) -\left(s+\frac{1}{6}\right) \ch_1^\beta(A) \right) + \sqrt{-1} \left( \ch^\beta_2 (A) - \frac{\alpha^2}{2} \ch^{\beta}_0 (A) \right),\ s>0,
$$
with corresponding Bridgeland slopes
$$
\lambda_{\beta,\alpha, s} (A):= \frac{
    \ch_3^\beta (A) -\left(s+\frac{1}{6}\right) \alpha^2 \ch_1^\beta(A)
}{
    \ch^\beta_2 (A) - \frac{\alpha^2}{2} \ch^{\beta}_0 (A).
}
$$
Moreover, these central charges define honest Bridgeland stability conditions as they satisfy the support property with respect to the quadratic form
$$
Q_{\beta,\alpha,K}(A)=\ch^{\beta}(A)B_{\alpha,K}\ch^{\beta}(A)^t, \ \ \text{where}\ \ B_{\alpha,K}=\begin{pmatrix}
    0 & 0 &-K\alpha^2 & 0\\
    0 & K\alpha^2 & 0 & -3\\
    -K\alpha^2 & 0 & 4 &0\\
    0 & -3 & 0 & 0
\end{pmatrix}
$$
and $K\in [1,6s+1)$. 
\end{enumerate}
\vspace{0.3cm}
\subsection{The preferred slice and numerical constraints} Fix $\beta \in \mathbb{R}$ and a twisted Chern character vector $v=\ch^{\beta}=(-R,0,D,0)$, with $R\geq 0$ and $D>0$. In the 2-dimensional cone of Bridgeland stability conditions $\sigma_{\beta,\alpha,s}= \left(
\mathcal{A}^{\beta,\alpha}, Z_{\beta,\alpha,s}\right)$, with $\alpha, s>0$, an \textbf{actual Bridgeland wall} for $v$ is given by a short exact sequence in $\mathcal{A}^{\beta,\alpha}$
\[
\begin{tikzcd}
    0 \arrow{r} & A \arrow{r} & E \arrow{r} & B \arrow{r} & 0,
\end{tikzcd}
\]
such that $\ch^{\beta}(E)=v$ and  $\lambda_{\beta,\alpha,s}(A)= \lambda_{\beta,\alpha,s}(E)$, which is equivalent to having
\begin{equation}
    \label{eq:1}
    \lambda_{\beta,\alpha,s}(A)= 0
\end{equation}
since $\ch_3^\beta(E)=\ch^{\beta}_1(E)=0$, while a \textbf{numerical wall} consists uniquely of a solution of the equation (\ref{eq:1}). By a variant of Bertram's Lemma (see \cite[Lemma 5.1]{vertical2023}) we know that if there is a destabilizing subobject for a point on a wall, then that subobject destabilizes the same object along every $(\alpha,s)$ on that wall. Now, using the formula for the Bridgeland slope one gets that in the $(\alpha,s)$-slice of stability conditions, all numerical walls for the Chern character $v$ have the form 
\[
\left(s+\frac{1}{6}\right) \alpha^2 = \frac{\ch_3^\beta(A)}{\ch_1^\beta (A)} =: \frac{\alpha_0^2}{6}.
\] 
Thus, all numerical walls are disjoint hyperbolas, each intersecting the $\alpha$-axis at a point $\alpha_0$. For this reason, we may refer to a numerical wall by its corresponding value of $\alpha_0$. In \cite{vertical2023} was also shown that if $A$ is a destabilizing subobject of $E$ with $\ch^{\beta}(A)=(r,c,d,e)$ and $c>0$, then $r,c,d$ and $e$ satisfy the following \textbf{numerical conditions}:
\begin{gather}
\label{num1}
    0<2d<2D, \tag{N1} \\
   \label{num2} 0<c(6e) \leq \min\left\{ 4d^2, 4(D-d)^2 \right\}, \tag{N2} \\
   \label{num3} - \frac{c(2D-2d)}{6e}-R \leq r \leq \frac{c(2d)}{6e}. \tag{N3}
\end{gather}

\vspace{0.3cm}
\noindent
These bounds are obtained after noticing that at all points of the wall produced by the subobject $A$ (with $s\gg 0$ in the $(\alpha,s)$-slice) we must have
$$
0\leq Q_{\beta,\alpha,s}(A)\leq Q_{\beta,\alpha,s}(E).
$$
In the special case where $X=\mathbb{P}^3$ and $\beta=0$, in addition to the bounds \eqref{num1}, \eqref{num2} and \eqref{num3}, we also have the \textbf{integral conditions}
\begin{gather}
\label{int1}
d-\frac{c^2}{2} \in \mathbb{Z}, \tag{I1} \\
\label{int2} 2e-cd+\frac{c^3}{6} \in \mathbb{Z}, \tag{I2} \\
\label{int3} e-\frac{c}{6} \in \mathbb{Z}, \tag{I3}
\end{gather}  
coming from the fact that $c_1(A),\ c_2(A)$ and $\chi(A)$ are integers. In this particular case, it was also obtained that there is always an actual wall $\alpha_0=1$, originated by the subobject $\mathcal{O}_{\mathbb{P}^3}(1)$ which destabilizes every object, thus called the \textbf{killing wall}. This, along with the numerical and integral conditions was used to find all the actual walls for the Chern character $v=(0,0,3,0)$ and to give a list of possible destabilizing sequences (see Figure \ref{fig:1} and Table \ref{table:1}).

\begin{figure}
\centering
  \includegraphics[width=.5\textwidth]{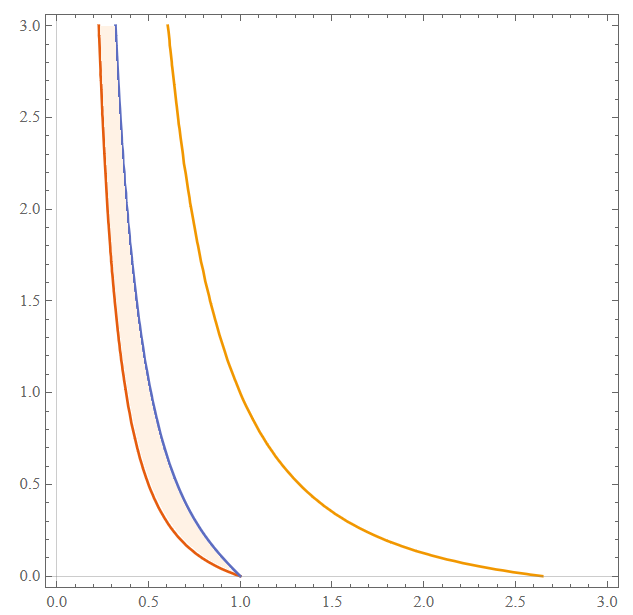}
  \captionof{figure}{The actual walls $\alpha_0=1,\sqrt{7}$ for $v=(0,0,3,0)$ in orange, and the quiver region shadowed in light orange.}
  \label{fig:1}
\end{figure}

\subsection{Moduli spaces of 1-dimensional space sheaves} Lastly, let us denote the Gieseker moduli space of space sheaves with Hilbert polynomial $p(t)$ by $\mathcal{M}_{p(t)}$. When $p(t)=dt$ with $d>1$, according to \cite{jardim2017moduli}, there are some distinguished subsets: 
\begin{itemize}
    \item $\mathcal{P}_d$: consisting of planar sheaves, which is closed, irreducible and projective,
    \item $\mathcal{R}_d$: the closure of the locus of sheaves supported on rational curves of degree $d$, and 
    \item $\mathcal{E}_d$: the closure of the locus of sheaves supported on elliptic curves of degree $d$.
\end{itemize}
It was also shown that it is possible to describe the components of $\mathcal{M}_{dt}$ in terms of decreasing integer partitions of $d= (d_1,d_2, \dots, d_l)$ with $d_1 \geq \cdots \geq d_l$.  

\begin{lemma}{\cite[Lemma 13]{jardim2017moduli}}
    The irreducible components of $\mathcal{M}_{dt}$ are exhausted by the irreducible components of the union
    \[
    \bigcup_{(d_1,\ldots, d_l)} \overline{\mathcal{M}}_{d_1,\ldots, d_l}
    \]
    where $\mathcal{M}_{d_1,\ldots, d_l} \subset \mathcal{M}_{dt}$ denotes the locally closed subset of $S$-equivalence classes $[F_1 \oplus \cdots \oplus F_l]$ with $F_i \in \mathcal{M}_{d_it}$ stable. 
\end{lemma}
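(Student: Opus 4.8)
The plan is to reduce the statement to a set-theoretic decomposition of $\mathcal{M}_{dt}$ together with the elementary fact that the irreducible components of a finite union of closed subsets are found among the components of the pieces. First I would recall that a closed point of the Gieseker moduli space $\mathcal{M}_{dt}$ is an $S$-equivalence class of Gieseker semistable sheaves with Hilbert polynomial $dt$, and that each such class has a canonical polystable representative, namely the associated graded $\operatorname{gr}(F)=F_1\oplus\cdots\oplus F_l$ of a Jordan--Hölder filtration of $F$. Since $F$ is pure of dimension $1$ with $\chi(F)=0$, its reduced Hilbert polynomial is $p(t)=t$; as Jordan--Hölder factors of a semistable sheaf share the same reduced Hilbert polynomial, each $F_i$ is stable with reduced Hilbert polynomial $t$, and purity forces $\chi(F_i)=0$. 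Hence $F_i\in\mathcal{M}_{d_it}$ for some $d_i>0$ with $\sum_i d_i=d$.

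Reordering so that $d_1\geq\cdots\geq d_l$, the polystable representative of each point of $\mathcal{M}_{dt}$ lies in exactly one locus $\mathcal{M}_{d_1,\dots,d_l}$, which gives the set-theoretic equality
\[
\mathcal{M}_{dt}=\bigcup_{(d_1,\dots,d_l)}\mathcal{M}_{d_1,\dots,d_l},
\]
indexed by the (finitely many) decreasing partitions of $d$. Each $\mathcal{M}_{d_1,\dots,d_l}$ is the image of the direct-sum map $\prod_i \mathcal{M}^{s}_{d_it}\to\mathcal{M}_{dt}$, $(F_1,\dots,F_l)\mapsto[F_1\oplus\cdots\oplus F_l]$, and in particular is constructible. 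Since the closure of a finite union is the union of the closures, passing to closures inside $\mathcal{M}_{dt}$ yields the finite closed cover
\[
\mathcal{M}_{dt}=\bigcup_{(d_1,\dots,d_l)}\overline{\mathcal{M}}_{d_1,\dots,d_l}.
\]

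To conclude, I would invoke the topological lemma that if a Noetherian space is a finite union $Z=\bigcup_j Z_j$ of closed subsets, then every irreducible component of $Z$ is an irreducible component of some $Z_j$: an irreducible component $W$ of $Z$ satisfies $W=\bigcup_j(W\cap Z_j)$, so irreducibility forces $W\subseteq Z_j$ for some $j$, and maximality of $W$ makes it a component of that $Z_j$. Applying this to the cover above shows that the irreducible components of $\mathcal{M}_{dt}$ are exhausted by those of the closures $\overline{\mathcal{M}}_{d_1,\dots,d_l}$, which is the assertion.

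The main obstacle I anticipate is not the final topology step but the bookkeeping in the first paragraph: one must check that the Jordan--Hölder factors are genuinely constrained to have Hilbert polynomial $d_it$ (via purity and the vanishing of the reduced offset $\chi(F_i)/d_i$), and that the assignment sending an $S$-equivalence class to its multiset of factor-degrees is well defined, i.e.\ independent of the chosen Jordan--Hölder filtration. This well-definedness is precisely what guarantees that the cover is indexed by partitions of $d$, and it rests on the uniqueness up to isomorphism and reordering of the polystable representative $\operatorname{gr}(F)$.
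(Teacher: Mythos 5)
Your argument is correct and is essentially the standard one: the paper itself does not prove this statement but imports it as \cite[Lemma 13]{jardim2017moduli}, and the proof there proceeds along the same lines you outline --- stratify $\mathcal{M}_{dt}$ by the multiset of degrees of the Jordan--H\"older factors of the polystable representative, observe the strata are constructible with closures covering $\mathcal{M}_{dt}$, and apply the Noetherian fact that components of a finite closed cover exhaust the components of the whole. The only cosmetic slip is attributing $\chi(F_i)=0$ to purity; it follows from the equality of reduced Hilbert polynomials of Jordan--H\"older factors, which you had already invoked in the same sentence.
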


\noindent
Even more, within the proof of this lemma, it is proven that each irreducible component of $\overline{\mathcal{M}}_{d_1,\dots,d_l}$ is birational to a symmetric product of irreducible components $\mathcal{X}_k\subset \mathcal{M}_{(d-k)t}$ for $k=1,\dots,l$.


\section{General walls and numerical conditions}\label{sec:wallsandnumcond}

This section has the intention of describing a few actual walls that always appear for the case $v=(0,0,D,0)$ with $\beta=0$, based on the description of the moduli space $\mathcal{M}_{Dt}$. We will also discuss the integral and numerical conditions for a destabilizing subobject, as in Section \ref{sec:prelim}, for the case when $\beta=k^{-1}$ for some $k\in\mathbb{N}$. 

\begin{lemma}
\label{lemma3.1}
    If $\alpha_0$ a numerical wall for the Chern character $v= (-R,0,D,0)$, then $\alpha_0$ is a numerical wall for the Chern character $(-R,0,D+1,0)$. Moreover, if $\beta=0$ and $\alpha_0$ is an actual wall destabilizing a 1-dimensional Gieseker semistable sheaf of Chern character $v=(0,0,D,0)$, then $\alpha_0$ is also an actual wall for the Chern character $(0,0,D+1,0)$.
\end{lemma}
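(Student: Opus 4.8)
The plan is to use the observation, recorded just after \eqref{eq:1}, that a numerical wall is determined entirely by the ratio $\ch_3^\beta(A)/\ch_1^\beta(A)=e/c$ of its (would-be) destabilizer $\ch^\beta(A)=(r,c,d,e)$, through the relation $\alpha_0^2=6e/c$. Passing from $D$ to $D+1$ never touches $c$ or $e$, so the whole lemma reduces to reusing the same $c$ and $e$: for the numerical part I keep the destabilizing Chern character and check that the constraints \eqref{num1}--\eqref{num3} only relax, and for the actual part I enlarge the destabilizing sequence by a fixed sheaf of Chern character $(0,0,1,0)$ without moving $c$ or $e$.

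For the numerical statement, suppose $(r,c,d,e)$ with $c>0$ satisfies \eqref{num1}--\eqref{num3} for $v=(-R,0,D,0)$; the same vector then works for $(-R,0,D+1,0)$, keeping $\alpha_0^2=6e/c$ fixed. This is pure monotonicity in $D$: \eqref{num1} relaxes to $0<d<D+1$; in \eqref{num2}, replacing $D$ by $D+1$ enlarges $4(D+1-d)^2$ and hence cannot decrease $\min\{4d^2,4(D+1-d)^2\}$, so $0<6ce$ still fits under the minimum; and in \eqref{num3} the upper bound $c(2d)/6e$ is unchanged while the lower bound $-c(2D-2d)/6e-R$ only decreases, so $r$ stays in range. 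When $\beta=0$ the integral conditions \eqref{int1}--\eqref{int3} depend only on $c,d,e$ and so are untouched.

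For the actual statement, with $\beta=0$ and $X=\mathbb{P}^3$, let $0\to A\to E\to B\to 0$ be the destabilizing sequence in $\mathcal{A}^{0,\alpha}$, where $E$ is Gieseker semistable of Chern character $(0,0,D,0)$ and $\ch^0(A)=(r,c,d,e)$ with $c>0$. Fixing a line $L\subset\mathbb{P}^3$, a Koszul computation gives $\ch^0(\mathcal{O}_L(1))=(0,0,1,0)$, and $\mathcal{O}_L(1)$ shares the reduced Hilbert polynomial $t+2$ of $E$; thus $E':=E\oplus\mathcal{O}_L(1)$ is a Gieseker semistable sheaf of Chern character $(0,0,D+1,0)$. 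Since $\mathcal{O}_L(1)$ is a pure one-dimensional sheaf it has $\ch_1^0=0$ and tilt-slope $\nu_{0,\alpha}=+\infty$, so it lies in $\mathcal{T}_{0,\alpha}\subset\mathcal{A}^{0,\alpha}$, and hence so does $E'$. Taking the direct sum of the given sequence with the identity sequence on $\mathcal{O}_L(1)$ produces a short exact sequence $0\to A'\to E'\to B\to 0$ in $\mathcal{A}^{0,\alpha}$ with $A'=A\oplus\mathcal{O}_L(1)$ and $\ch^0(A')=(r,c,d+1,e)$. Along the hyperbola $(s+\tfrac16)\alpha^2=e/c=\alpha_0^2/6$ the numerator $\ch_3^0(A')-(s+\tfrac16)\alpha^2\,\ch_1^0(A')=e-(e/c)c=0$ vanishes, while the imaginary part $\ch_2^0(A')-\tfrac{\alpha^2}{2}\ch_0^0(A')=(d+1)-\tfrac{\alpha^2}{2}r$ exceeds the positive imaginary part of $Z_{0,\alpha,s}(A)$ and so stays positive; hence $\lambda_{0,\alpha,s}(A')=0=\lambda_{0,\alpha,s}(E')$, and $\alpha_0$ is an actual wall for $(0,0,D+1,0)$. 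Because $c\neq 0$, that numerator is nonzero off the hyperbola, so this is a genuine wall and not a coincidence holding throughout the slice.

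The delicate part is this actual-wall half, in two respects. First, the auxiliary sheaf must have Chern character exactly $(0,0,1,0)$ and reduced Hilbert polynomial $t+2$ to preserve both the shape $(0,0,\ast,0)$ and the slope; this forces the twist $\mathcal{O}_L(1)$ rather than $\mathcal{O}_L$, whose $\ch_3^0=-1$ would break the $(0,0,\ast,0)$ form. Second, one must confirm that $\mathcal{O}_L(1)$ and $E'$ genuinely lie in the double-tilted heart $\mathcal{A}^{0,\alpha}$, so that the direct-sum sequence is exact there; this is where the $\ch_1^0=0$, $\nu_{0,\alpha}=+\infty$ bookkeeping enters. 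The numerical half, by contrast, is entirely routine once the monotonicity of \eqref{num1}--\eqref{num3} in $D$ is noted.
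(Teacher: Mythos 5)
Your proposal is correct and follows essentially the same route as the paper: the numerical half is the same monotonicity-in-$D$ observation, and the actual half rests on the same idea of adjoining $\mathcal{O}_L(1)$ (the paper phrases this via the inclusion $\overline{\mathcal{M}}_{D,1}\subset\mathcal{M}_{(D+1)t}$ and keeps $A$, resp.\ $B$, as the destabilizing subobject, resp.\ quotient, of any sheaf in the class $[F\oplus\mathcal{O}_L(1)]$, while you take the explicit direct sum and enlarge the subobject to $A\oplus\mathcal{O}_L(1)$, which leaves the ratio $\ch_3^0/\ch_1^0$ and hence the wall unchanged). Both versions are equally valid under the paper's definition of an actual wall.
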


\begin{proof}
    The numerical case follows as the numerical conditions \eqref{num1}, \eqref{num2} and \eqref{num3} will still hold when we increase $D$. For the actual case, suppose that 
    $
    0\rightarrow A\rightarrow F \rightarrow B\rightarrow 0
    $ is a short exact sequence in $\mathcal{A}^{0,\alpha}$ destabilizing a 1-dimensional Gieseker semistable sheaf $F$ with $\ch(F) = (0,0,D,0)$ and producing the wall
    \begin{equation}\label{walllemma}
    \left(s+\frac{1}{6}\right)\alpha^2=\frac{\ch_3(A)}{\ch_1(A)}=\frac{\ch_3(B)}{\ch_1(B)}.
    \end{equation}
    \noindent
     Since we have an inclusion $\overline{\mathcal{M}}_{D,1}\subset \mathcal{M}_{(D+1)t}$, then every 1-dimensional Gieseker semistable sheaf in the S-equivalence class $[F\oplus \mathcal{O}_L(1)]$ will have either $A$ as subobject or $B$ as a quotient in $\mathcal{A}^{0,\alpha}$. Thus, \eqref{walllemma} will also be the equation of an actual wall for the Chern character $(0,0,D+1,0)$.
\end{proof}
\noindent
We now proceed with our first main result:
\begin{proposition}
\label{prop3.3}
    Let $\ch = (0,0,D,0)$. Then for $D\geq 4$ there are at least three actual walls in the $(\alpha,s)$-slice of $\operatorname{Stab}(\mathbb{P}^3)$, namely $\alpha_0 = 1, 2 \textnormal{ and } \sqrt{7}$. We will call them the killing, elliptical and planar wall, respectively. 
\end{proposition}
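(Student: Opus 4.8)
The plan is to produce each of the three walls from an explicit destabilizing short exact sequence in $\mathcal{A}^{0,\alpha}$ whose subobject is a twisted structure sheaf of a linear subspace or of a complete intersection, and then to propagate the walls upward in $D$ by means of Lemma \ref{lemma3.1}. I expect the killing wall $\alpha_0=1$ to come from $\mathcal{O}_{\mathbb{P}^3}(1)$, the planar wall $\alpha_0=\sqrt 7$ from a twisted plane $\mathcal{O}_{\mathbb{P}^2}(2)$, and the elliptical wall $\alpha_0=2$ from a twisted quadric surface $\mathcal{O}_Q(2)$. Since Lemma \ref{lemma3.1} carries an actual wall destabilizing a $1$-dimensional Gieseker semistable sheaf of character $(0,0,D,0)$ to an actual wall for $(0,0,D+1,0)$, it suffices to exhibit each wall at its minimal value of $D$ and then induct.

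First I would dispose of the killing and planar walls, both of which already occur for $v=(0,0,3,0)$ (see Figure \ref{fig:1}). The wall $\alpha_0=1$ is the killing wall recalled in Section \ref{sec:prelim}: it is produced by the subobject $\mathcal{O}_{\mathbb{P}^3}(1)$, destabilizes every object, and hence exists for all $D$. For the planar wall, let $C\subset\mathbb{P}^2\subset\mathbb{P}^3$ be a plane cubic and $F=\mathcal{O}_C(2)$, a Gieseker stable sheaf with $\ch(F)=(0,0,3,0)$; the restriction sequence $0\to\mathcal{O}_{\mathbb{P}^2}(-1)\to\mathcal{O}_{\mathbb{P}^2}(2)\to\mathcal{O}_C(2)\to 0$ rotates to a short exact sequence $0\to\mathcal{O}_{\mathbb{P}^2}(2)\to\mathcal{O}_C(2)\to\mathcal{O}_{\mathbb{P}^2}(-1)[1]\to 0$ in $\mathcal{A}^{0,\alpha}$, and $\ch(\mathcal{O}_{\mathbb{P}^2}(2))=(0,1,\tfrac32,\tfrac76)$ gives $\alpha_0=\sqrt7$. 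By Lemma \ref{lemma3.1} both walls survive for every $D\geq4$.

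The heart of the argument is the elliptical wall, which I would establish directly at $D=4$. Take $C=Q_1\cap Q_2$ a smooth elliptic quartic curve, write $Q=Q_1$, and set $F=\mathcal{O}_C(2)$; a short computation gives $\ch(F)=(0,0,4,0)$, and $F$ is Gieseker stable, being a line bundle on an integral curve. Since $C\in|\mathcal{O}_Q(2)|$ is cut out on $Q$ by $Q_2$, there is a sheaf sequence $0\to\mathcal{O}_Q\to\mathcal{O}_Q(2)\to\mathcal{O}_C(2)\to 0$ whose rotation is $0\to\mathcal{O}_Q(2)\to\mathcal{O}_C(2)\to\mathcal{O}_Q[1]\to 0$. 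From $\ch(\mathcal{O}_Q)=(0,2,-2,\tfrac43)$ and $\ch(\mathcal{O}_Q(2))=(0,2,2,\tfrac43)$ one reads off the $\alpha$-independent tilt slopes $\nu_{0,\alpha}(\mathcal{O}_Q(2))=1>0$ and $\nu_{0,\alpha}(\mathcal{O}_Q)=-1\leq0$, so that $\mathcal{O}_Q(2)\in\mathcal{T}_{0,\alpha}$ and $\mathcal{O}_Q[1]\in\mathcal{F}_{0,\alpha}[1]$ both lie in $\mathcal{A}^{0,\alpha}$; the rotated sequence is then genuinely exact there, with $\mathcal{O}_Q(2)$ a Bridgeland subobject of $F$. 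Finally the wall equation reads $\tfrac{\alpha_0^2}{6}=\tfrac{\ch_3(\mathcal{O}_Q(2))}{\ch_1(\mathcal{O}_Q(2))}=\tfrac{2}{3}$, that is $\alpha_0=2$, and a last application of Lemma \ref{lemma3.1} propagates the wall to all $D\geq4$. The bound $D\geq4$ is precisely the statement that a non-degenerate elliptic curve in $\mathbb{P}^3$ has degree at least $4$, so that this configuration is unavailable for $D=3$.

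The step I expect to be the main obstacle is confirming that the rotated triangles are honest short exact sequences in the double-tilted heart $\mathcal{A}^{0,\alpha}$, with $\mathcal{O}_Q(2)$ (resp.\ $\mathcal{O}_{\mathbb{P}^2}(2)$) a genuine Bridgeland subobject rather than merely a subsheaf. This comes down to the tilt-(semi)stability of the twisted structure sheaves $\mathcal{O}_Q,\ \mathcal{O}_Q(2)$ and $\mathcal{O}_{\mathbb{P}^2}(2),\ \mathcal{O}_{\mathbb{P}^2}(-1)$, which guarantees that their tilt Harder--Narasimhan factors do not straddle $\nu_{0,\alpha}=0$ and that the placement into $\mathcal{T}_{0,\alpha}$ or $\mathcal{F}_{0,\alpha}[1]$ is unambiguous. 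Once heart membership is secured, equality of Bridgeland slopes along each wall is forced by the Chern-character computation above, and the distinctness of $1,\ 2,\ \sqrt7$ is immediate.
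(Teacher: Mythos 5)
Your proposal is correct and shares the paper's overall strategy: exhibit each wall by an explicit short exact sequence in $\mathcal{A}^{0,\alpha}$ at the smallest relevant $D$, then propagate upward with Lemma \ref{lemma3.1}. The killing and planar walls are handled exactly as in the paper (the subobject $\mathcal{O}_{\mathbb{P}^3}(1)$, and the rotation of $0\to\mathcal{O}_H(-1)\to\mathcal{O}_H(2)\to\mathcal{O}_C(2)\to 0$ for a plane cubic $C$). Where you genuinely diverge is the elliptical wall: the paper destabilizes $\mathcal{O}_{\mathcal{E}}(2)$ by the rank-one subobject $\mathcal{O}_{\mathbb{P}^3}(2)$ with quotient $\mathcal{I}_{\mathcal{E}}(2)[1]$, and must then verify $\mathcal{I}_{\mathcal{E}}(2)\in\mathcal{F}_{0,\alpha}$, which it obtains only for $\alpha\geq\sqrt{2}$ (sufficient, via the Bertram-type lemma, to make the whole wall actual). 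You instead restrict from a quadric $Q$ containing $C=Q_1\cap Q_2$ and use the rank-zero subobject $\mathcal{O}_Q(2)$ with quotient $\mathcal{O}_Q[1]$. Both choices realize the numerical class $\left(r,2,2,\frac{4}{3}\right)$ and the same wall $\alpha_0=2$, but your subobject and quotient have $\alpha$-independent tilt slopes, so the sequence is exact in $\mathcal{A}^{0,\alpha}$ for every $\alpha>0$ and no restriction on $\alpha$ or appeal to Bertram's lemma is needed; it also produces the $r=0$ representative of the destabilizing class, consistent with Proposition \ref{prop:r0c1}.

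The one step you flag but do not carry out --- tilt-semistability of $\mathcal{O}_Q$ and $\mathcal{O}_Q(2)$ along $\beta=0$ --- does go through by the same argument the paper uses in Lemma \ref{lemma:familiaarobjects}(3): both sheaves are $2$-Gieseker stable with $\Delta=4$, so by \cite[Lemma 7.2]{macrischmidt19} all their tilt walls lie in the closed disk of radius $\sqrt{\Delta}/2=1$ centered at $(\mp 1,0)$, which meets the line $\beta=0$ only at the origin; hence they are $\nu_{0,\alpha}$-semistable for all $\alpha>0$, and the memberships $\mathcal{O}_Q(2)\in\mathcal{T}_{0,\alpha}$ and $\mathcal{O}_Q\in\mathcal{F}_{0,\alpha}$ follow from the slope values $1$ and $-1$ you computed. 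Since this is a tangency (boundary) case of the radius bound, it is worth writing out explicitly; with that added, your proof is complete.
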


\begin{proof}
\textbf{The killing wall}. The actualness of the wall with $\alpha_0=1$ was proved in \cite{vertical2023}. Furthermore, the authors also showed that there are no Bridgeland semistable objects of Chern character $(0,0,D,0)$ in the region
$$
\left(s+\frac{1}{6}\right)\alpha^2<\frac{1}{6}.
$$
\textbf{The planar wall}. The Chern character associated to this numerical wall is of the form $\left(r,1,\frac{3}{2},\frac{7}{6} \right)$, with $r=0$ being always a possibility. In this particular case, if $H\subset \mathbb{P}^3$ is a plane, then $\mathcal{O}_H (2)$ has the correct destabilizing Chern character. As observed in \cite[Example 6.14]{vertical2023}, this wall is actual when $D=3$. In fact, it destabilizes the sheaves $\mathcal{O}_C(2)$ for $C$ a cubic plane curve, which fit into an exact sequence in $\mathcal{A}^{0,\alpha}$:
$$
0\longrightarrow \mathcal{O}_H(2)\longrightarrow \mathcal{O}_C(2)\longrightarrow \mathcal{O}_H(-1)[1]\longrightarrow 0.
$$
Then, for $D>3$ the result follows from Lemma \ref{lemma3.1}.
\\
  \noindent      
 \textbf{The elliptical wall}. This is the wall $\alpha_0=2$ associated to the destabilizing Chern character $\left(r,2,2,\frac{4}{3}\right)$. To see that this wall is actual, take $D=4$ and let $\mathcal{E}$ be an elliptic curve of degree $4$. Then $\mathcal{O}(2)$ is a destabilizing subobject of $\mathcal{O}_\mathcal{E}(2)$ with Chern character of this type (and $r=0$). More specifically, we get the exact triangle
 $$
 \mathcal{O}(2)\longrightarrow \mathcal{O}_{\mathcal{E}}(2)\longrightarrow \mathcal{I}_{\mathcal{E}}(2)[1]\longrightarrow \mathcal{O}(2)[1],
 $$
which is in fact a short exact sequence in the category $\mathcal{A}^{0,\alpha}$, proving that the wall is actual. To see this, notice that since $\mathcal{E}$ is irreducible then $\mathcal{I}_{\mathcal{E}}(2)$ is Mumford semistable with $\mu_H(\mathcal{I}_{\mathcal{E}}(2))=2>0$ and therefore $\mathcal{I}_{\mathcal{E}}(2)\in\Coh^0(\mathbb{P}^3)$. Moreover, using the estimate for the radius of tilt-walls \cite[Lemma 7.2]{macrischmidt19} and that $\mathcal{I}_{\mathcal{E}}(2)$ is tilt-semistable for $\alpha\gg 0$ since it is 2-Gieseker semistable, we obtain that $\mathcal{I}_{\mathcal{E}}(2)$ is $\nu_{0,\alpha}$-semistable for 
$$
\alpha^2\geq\frac{\Delta(\mathcal{I}_{\mathcal{E}}(2))}{4}=2.
$$
Thus, $\mathcal{I}_{\mathcal{E}}(2)[1]\in \mathcal{A}^{0,\alpha}$ at least for $\alpha\geq \sqrt{2}$,  since $\nu_{0,\alpha}(\mathcal{I}_{\mathcal{E}}(2))=-1-\frac{\alpha^2}{4}<0$.
\end{proof}

\begin{example}\label{selfdualwall}
    It is easy to verify that for $\beta=0$, the Chern character of the object $\mathcal{O}_H\left(\lceil\frac{D}{2}\rceil\right)$, always satisfies the numerical conditions and so gives a numerical wall. A possible  destabilizing sequence could be
    $$
    0\longrightarrow \mathcal{O}_H\left(\left\lceil\frac{D}{2}\right\rceil\right) \longrightarrow E\longrightarrow \mathcal{O}_H\left(\left\lceil\frac{D}{2}\right\rceil\right)^{\vee}[2]\longrightarrow 0 .
    $$
    It is possible to show that in fact there are always objects $E$ corresponding to nonsplit extensions, however, whether this object is Bridgeland semistable depends on the semistability of $\mathcal{O}_H\left(\lceil\frac{D}{2}\rceil\right)$ on the corresponding numerical wall. This is highly nontrivial, even in the case of tilt stability for surfaces, the pushforward of a line bundle on a curve might be Bridgeland unstable for some values of the stability parameters. When $D$ is odd, these extensions correspond to the (twisted) structure sheaves of plane curves of degree $D$, which fit into exact sequences
    $$
    0\longrightarrow \mathcal{O}_H\left(\frac{D+1}{2}\right)\longrightarrow \mathcal{O}_C\left(\frac{D+1}{2}\right)\longrightarrow \mathcal{O}_H\left(\frac{1-D}{2}\right)[1]\longrightarrow 0.
    $$
    This destabilizing sequence corresponds to the outermost  tilt wall for the Chern character $(0,D,\frac{D}{2})$ on $\mathbb{P}^2$ as studied in \cite{martinez2017duality}.
\end{example}
\begin{remark}\label{p2p3correspondence}
    As suggested in Example \ref{selfdualwall}, one way of guessing possible destabilizing sequences for the Chern character $(0,0,D,0)$ in the $(\alpha,s)$-slice would be to pushforward destabilizing sequences for tilt stability on $\mathbb{P}^2$. This is justified because if $H\subset \mathbb{P}^3$ is a hyperplane and $\iota \colon \mathbb{P}^2\hookrightarrow \mathbb{P}^3$ is the corresponding closed immersion, then given $\mathcal{F}\in D^b(\mathbb{P}^2)$ with $\ch(\mathcal{F})=(r,c,d)$, we can compute $\ch(\iota_*\mathcal{F})=(0,r,c-\frac{r}{2},d-\frac{c}{2}+\frac{r}{6})$. With this in mind, under the choices
    $$
    w=(0,c,d),\ \ \ v=\iota_*(w)=(0,0,c,d-\frac{c}{2}),\ \ \ \bar{s}=\frac{d}{c},\ \ \ \bar{\beta}=\frac{d}{c}-\frac{1}{2},\ \ \ t^2=\left(2s+\frac{1}{3}\right)\alpha^2-\frac{1}{12},
    $$
    we get that 
    $$
    \nu_{\bar{s},t}(w)=\lambda_{\bar{\beta},\alpha,s}(v)=0,\ \text{and}\ \ \nu_{\bar{s},t}(A)=0\ \ \text{if and only if}\ \ \lambda_{\bar{\beta},\alpha,s}(\iota_*(A))=0.
    $$
    Thus, every wall destabilizing a 1-dimensional Gieseker semistable sheaf $\mathcal{F}\in \Coh(\mathbb{P}^2)$ of Chern character $w$ with respect to $\nu_{\bar{s},t}$-stability produces a numerical wall for the Chern character $v$. Moreover, this numerical wall destabilizes $\iota_*\mathcal{F}$ provided that the pushforward of the destabilizing sequence in $\Coh^{\bar{s}}(\mathbb{P}^2)$ becomes a short exact sequence in $\mathcal{A}^{\bar{\beta},\alpha}$.
\end{remark}

\subsection{Numerical walls for \texorpdfstring{$\beta\neq 0$}{beta not zero}}

Here we will describe the inequalities that arise from the equations (\ref{num1}),(\ref{num2}),(\ref{num3}) and (\ref{int1}),(\ref{int2}),(\ref{int3}) when $\beta \neq 0$ and for the particular case of $\beta=\frac{1}{k}$. More specifically, we would like to find all the integral solutions to such inequalities. Setting $\ch^{\beta}=(r,c,d,e)$, we obtain
$$
\begin{bmatrix}
    r\\ c\\ d\\ e
\end{bmatrix}
=
\begin{bmatrix}
    1 & 0 & 0 & 0\\
    -\beta & 1 & 0 & 0 \\
    {\displaystyle \frac{\beta^2}{2}} & -\beta & 1 & 0 \\
    {\displaystyle -\frac{\beta^3}{6}} & {\displaystyle \frac{\beta^2}{2}} & -\beta & 1
\end{bmatrix}\begin{bmatrix}
    \ch_0\\ \ch_1 \\ \ch_2\\ \ch_3
\end{bmatrix}.
$$ 
With this in mind, our first filter becomes:
\begin{align*}
    \ch_2 - \frac{\ch_1^2}{2} &\in \mathbb{Z} \\
    r\frac{\beta^2}{2} + c\beta + d - \frac{(c+\beta r)^2}{2} &\in \mathbb{Z} \\
    \label{eqibeta1}
    d- \frac{c^2}{2} + \beta (1-r) \left( \frac{\beta}{2} r + c \right) & \in \mathbb{Z} \tag{I$\beta$1};
\end{align*}
our second filter becomes:
\begin{align*}
    \ch_3 - \frac{\ch_1}{6} & \in \mathbb{Z} \\
    \label{eqibeta2}
    e-\frac{c}{6}+\beta \left( d-\frac{r}{6} \right) + \frac{\beta^2}{2}c + \frac{\beta^3}{6} r & \in \mathbb{Z} \tag{I$\beta$2};
\end{align*}
and the third one:
\begin{align*}
    2\ch_3 - \ch_1 \ch_2 + \frac{\ch_1^3}{6} & \in \mathbb{Z} \\
    \label{eqibeta3}
    \begin{split}
        2e-cd+ \frac{c^3}{6} + \beta \left( d(2-r) + c^2 (3r-1) \right) \\
        + \frac{\beta^2}{2} c(2+r(r-3)) + \frac{\beta^3}{6} r(r-1)(r-2) &\in \mathbb{Z}. 
    \end{split} \tag{I$\beta$3}
\end{align*}
Now, in the specific case when $\beta=\frac{1}{k}$ with $k\in \mathbb{Z}^+$, we get $r,kc,2k^2d,6k^3e\in \mathbb{Z}$ and our numerical conditions become the integral inequalities:
\begin{equation}
\label{eqnbeta1}
    0<2k^2 d < 2k^2 D \tag{N$\beta$1}
\end{equation}
\begin{equation}
\label{eqnbeta2}
0<kc(6k^3e) < k^4 \min \{ 4d^2 , 4 \left( D-d \right)^2 \} \tag{N$\beta$2}
\end{equation}
\begin{equation}
\label{eqnbeta3}
- \frac{c(2D-2d)}{6e}-R \leq r \leq \frac{c(2d)}{6e}. \tag{N$\beta$3=N3}
\end{equation}
Thus, the correct order for computing the destabilizing Chern characters should be
\[
\begin{tikzcd}
    \eqref{eqnbeta1}\arrow{r} & \eqref{eqnbeta2}\arrow{r} & \eqref{eqnbeta3} \arrow{r} & \eqref{eqibeta1} \arrow{r} & \eqref{eqibeta2} \arrow{r} & \eqref{eqibeta3}
\end{tikzcd}
\]

\section{Bounding the Gieseker wall}\label{sec:Giesekerwall}
In \cite{vertical2023} was found that when looking in the $(\alpha,s)$-slice there was a smallest wall $\alpha_0=1$ with the additional property that the moduli space of Bridgeland semistable objects was empty below it, i.e., for $(6s+1)\alpha^2<1$;  this was done by showing that no values of $\alpha_0<1$ could produce actual walls. In this section, we will give a linear bound in $\ch_2(E)=D$ for the maximum numerical wall, after which the moduli space is isomorphic to the Gieseker moduli space. 

\vspace{0.3cm}
\noindent
Only for this section, we will denote the maximum numerical wall for $v=(-R,0,D,0)$ by $\alpha_{\infty}^D$, considering it as a number in the $\alpha$-axis. 
\begin{proposition}
\label{prop4.1}
    Let $\ch(E)=(-R,0,D,0)$ with $R,D\geq 0$. Then $
    D-2 < \alpha_{\infty}^D \leq D
    $.
\end{proposition}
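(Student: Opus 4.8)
The plan is to prove the two inequalities by entirely different mechanisms: the upper bound $\alpha_\infty^D\le D$ falls out of the numerical constraint \eqref{num2}, while the lower bound $\alpha_\infty^D>D-2$ requires exhibiting one explicit admissible character whose associated $\alpha_0$ is forced above $D-2$. Throughout I work in the $\beta=0$ slice, so that a destabilizing character $\ch(A)=(r,c,d,e)$ has $c=\ch_1(A)\in\mathbb{Z}_{>0}$, hence $c\ge 1$.

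For the upper bound, I would start from the wall relation $\alpha_0^2=6e/c$, which gives $c(6e)=c^2\alpha_0^2$. Substituting into \eqref{num2} turns the two inequalities $c(6e)\le 4d^2$ and $c(6e)\le 4(D-d)^2$ into $(c\alpha_0)^2\le 4d^2$ and $(c\alpha_0)^2\le 4(D-d)^2$. Since $c,\alpha_0>0$ and $0<d<D$ by \eqref{num1}, taking positive square roots yields $c\alpha_0\le 2d$ and $c\alpha_0\le 2(D-d)$; adding these gives $c\alpha_0\le D$. As $c\ge 1$, every numerical wall satisfies $\alpha_0\le D$, and therefore $\alpha_\infty^D\le D$.

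For the lower bound I would construct a single admissible character. Take $r=0$, $c=1$, let $d$ be the half-integer closest to $D/2$ (so $d=D/2$ for $D$ odd and $d=(D-1)/2$ for $D$ even), and let $6e$ be the largest integer congruent to $1\bmod 6$ not exceeding $M:=\min\{4d^2,4(D-d)^2\}$. One checks directly that this character satisfies every requirement: \eqref{int1} and \eqref{int3} hold because $d-\tfrac12\in\mathbb{Z}$ and $e-\tfrac16\in\mathbb{Z}$, \eqref{int2} then reduces to the integer $2n-j$ (writing $d=j+\tfrac12$, $e=n+\tfrac16$), \eqref{num1} holds by the choice of $d$, \eqref{num2} holds since $0<6e\le M$, and \eqref{num3} holds with $r=0$ because its right endpoint $\tfrac{2d}{6e}$ is positive and its left endpoint is non-positive. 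This produces a numerical wall at $\alpha_0^2=6e$. To finish, I observe that $M$ is an odd perfect square (either $(2d)^2$ or $(2(D-d))^2$), and an odd square is congruent to $1$ or $3\bmod 6$; hence the integrality loss is at most $2$, i.e. $6e\ge M-2$. For odd $D$ this gives $\alpha_0^2\ge D^2-2>(D-2)^2$, and for even $D\ge 4$ it gives $\alpha_0^2\ge (D-1)^2-2>(D-2)^2$. The cases $D=1,2$ are immediate, since there the construction already yields $\alpha_0=1>D-2$. Combining, $D-2<\alpha_\infty^D\le D$.

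The main obstacle I anticipate is the lower bound: the numerically optimal value $6e=D^2$ at $d=D/2$ is generally \emph{not} attainable once \eqref{int1}–\eqref{int3} are imposed, so the whole argument hinges on controlling the arithmetic defect between $M$ and the nearest admissible $6e$ — together with the extra shift forced when $D$ is even and $d=D/2$ fails to be a half-integer. Verifying that this combined defect never exceeds the available slack $M-(D-2)^2$, through the residues of odd squares modulo $6$, is the delicate point; by contrast the upper bound is an essentially immediate consequence of \eqref{num2} once $c\ge 1$ is noted.
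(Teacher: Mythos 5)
Your proof is correct and follows essentially the same route as the paper's: both bound $\alpha_0^2=6e/c$ from above via condition (N2) and realize the lower bound with the extremal character $r=0$, $c=1$, $d$ a half-integer near $D/2$, and $6e$ the largest value $\equiv 1 \bmod 6$ not exceeding $\min\{4d^2,4(D-d)^2\}$, including the same mod-$6$ analysis of odd squares to control the integrality defect. The only differences are organizational: the paper splits into odd and even $D$ (obtaining the slightly sharper statements $D-1<\alpha_{\infty}^D\le D$ for odd $D$ and $\alpha_{\infty}^D=\alpha_{\infty}^{D-1}$ for even $D\ge 4$), whereas your summing of the two square-root inequalities $c\alpha_0\le 2d$ and $c\alpha_0\le 2(D-d)$ gives a cleaner, unified derivation of the upper bound.
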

We will prove this with two lemmas. 
\begin{lemma}\label{lem:Dodd}
    If $D$ is odd, then $D-1<\alpha_{\infty}^D\leq D$.
\end{lemma}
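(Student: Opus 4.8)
The plan is to read a uniform upper bound for $\alpha_0$ directly off the numerical constraints, and then to realize a value exceeding $D-1$ by writing down a single admissible destabilizing character. Recall that a numerical wall is recorded by $\alpha_0^2 = 6e/c$, where $(r,c,d,e)=\ch(A)$ is the (untwisted, since $\beta=0$) character of a destabilizing subobject with $c>0$; as $c=\ch_1(A)$ is then a positive integer we have $c\geq 1$, and $d,e,r$ are subject to \eqref{num1}--\eqref{num3} together with the integrality conditions \eqref{int1}--\eqref{int3}.

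First I would establish $\alpha_\infty^D\leq D$. Condition \eqref{num2} gives $6ce\leq\min\{4d^2,4(D-d)^2\}$, and the right-hand side, as a function of $d\in(0,D)$, attains its maximum $D^2$ at the crossing point $d=D/2$. Hence $\alpha_0^2 = 6e/c = 6ce/c^2 \leq D^2/c^2 \leq D^2$, where the last step uses $c\geq 1$. Since this holds for every numerical wall, $\alpha_\infty^D\leq D$.

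For the sharper lower bound I would exhibit one numerical wall with $\alpha_0 > D-1$. Take $c=1$ and $d=D/2$; because $D$ is odd, $d-\tfrac12=\tfrac{D-1}{2}\in\mathbb{Z}$, so \eqref{int1} holds, and \eqref{num2} becomes $6e\leq D^2$. I then maximize $e$ subject to \eqref{int3}, i.e.\ $e\in\mathbb{Z}+\tfrac16$. If $3\nmid D$ then $D^2\equiv 1\pmod 6$, so $e=D^2/6$ is admissible; one verifies \eqref{int2} and that $r=0$ lies in the interval of \eqref{num3}, and the wall satisfies $\alpha_0^2=6e=D^2$, i.e.\ $\alpha_0=D$. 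If $3\mid D$ (so $D\geq 3$), then $e=D^2/6$ violates \eqref{int3}, so I take instead $e=(D^2-2)/6\in\mathbb{Z}+\tfrac16$, which still satisfies \eqref{int2} and \eqref{num3} with $r=0$ and yields $\alpha_0^2=D^2-2$. Since $(D-1)^2=D^2-2D+1<D^2-2$ for $D\geq 2$, we again get $\alpha_0>D-1$. In every case $\alpha_\infty^D>D-1$, which together with the previous paragraph gives the lemma.

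The main obstacle is the verification in the lower-bound step: one must check that the proposed character $(0,1,D/2,e)$ simultaneously satisfies all three integral conditions \eqref{int1}--\eqref{int3} and the interval \eqref{num3}, and the behaviour of $D^2\bmod 6$ is exactly what forces the split into the cases $3\nmid D$ and $3\mid D$, the latter preventing $\alpha_0=D$ while still leaving $\alpha_0=\sqrt{D^2-2}>D-1$. By contrast, the upper bound is an immediate one-variable optimization once one notes that $\min\{4d^2,4(D-d)^2\}$ peaks at $d=D/2$.
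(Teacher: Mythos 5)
Your proof is correct and follows essentially the same route as the paper: the upper bound comes from \eqref{num2} together with $c\geq 1$ after maximizing $\min\{4d^2,4(D-d)^2\}$ at $d=D/2$, and the lower bound is witnessed by the character $(0,1,D/2,(D^2-k)/6)$ with the same split on whether $3\mid D$ (your $k=2$ versus $k=0$ is exactly the paper's case $D\equiv 3 \bmod 6$). Your phrasing of the upper bound as the one-line inequality $\alpha_0^2=6ce/c^2\leq D^2/c^2\leq D^2$ is in fact a bit cleaner than the paper's optimization language, and the only thing left implicit is the routine check of \eqref{int2}, which the paper carries out explicitly.
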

\begin{proof}
    It is enough to show that the maximum wall is produced by one of the following Chern characters: $\ch_0 (A)=0$, $\ch_1 (A)=1$, $\ch_2 (A)=\frac{D}{2}$ and 
    \[
    \ch_3(A)=\frac{D^2-k}{6} \quad \textnormal{ for some } k=0, \,\ldots,\, 5,
    \]
    and having in mind that for $D>3$, $2D-1>5$ and thus $D-1< \sqrt{D^2-5} < \alpha_{\infty}^D$. 

    \vspace{0.3cm}
    \noindent
    If $A\hookrightarrow E$ is a possible destabilizing object with $v=(r,c,d,e)$ then we need to maximize the value $\alpha_0=\frac{6e}{c}$ restricted to the inequalities
    \[
    0\leq c(6e) \leq \min\{4d^2, 4(D-d)^2\}
    \]
    with $c,6e\in \mathbb{Z}$. It can be shown that such a maximum occurs when $6e$ is maximal and $c$ is minimal. Trying to achieve a maximum for the function $f(d)=\min\{4d^2, 4(D-d)^2\}$ with $0<2d<2D$, we note that $f$ is smooth and differentiable everywhere except at its maximum $2d=D$. Thus, the maximum value of $(6e)c$ is achieved when $c=1$, $d=D/2$, and $6e=D^2-k$, with $k$ being the minimal integer value such that $e- \frac{1}{6}\in \mathbb{Z}$, i.e.,
    $$
    k=\begin{cases}
        2 & \text{if}\ D\equiv 3 \mod 6\\
        0 & \text{otherwise.}
    \end{cases}
    $$
    Using the third inequality (\ref{num3}), we get 
    \[
    -\frac{D}{D^2-k} -R \leq r \leq \frac{D}{D^2-k},
    \]
    which means that $r=0$ is always a possibility (and the unique one when $R=0$). Just remains to show that such Chern characters in fact hold the integral conditions. We verify them as follows:
    \begin{itemize}
        \item[I1.] $d-\frac{c^2}{2}= \frac{D-1}{2}\in \mathbb{Z}$ as $D$ is odd.  
        \item[I2.] We chose $6e$ such that $e-\frac{c}{6} \in \mathbb{Z}$.
        \item[I3.] $2e-cd+\frac{c^3}{6} = \frac{2D^2-2k-3D+1}{6}$. However, we note that $3D\equiv 3\mod 6$ and similarly by the second condition $D^2-k-1 \equiv 0 \mod 6$. This implies $2D^2-2k-3D+1\equiv 0 \mod 6$. 
    \end{itemize}
    Thus the Chern characters are possible. 
\end{proof}

\begin{lemma}
    If $D\geq 4$ is even, then $\alpha_{\infty}^D=\alpha_{\infty}^{D-1}$. In particular, $D-2<\alpha_{\infty}^D \leq D-1$. 
\end{lemma}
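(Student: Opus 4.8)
The plan is to prove the equality $\alpha_\infty^D=\alpha_\infty^{D-1}$ by two inequalities and then deduce the numerical range. The inequality $\alpha_\infty^{D-1}\le\alpha_\infty^D$ is free: by Lemma~\ref{lemma3.1} every numerical wall for $(-R,0,D-1,0)$ is a numerical wall for $(-R,0,D,0)$, so the maximum can only grow. Thus the real content is the reverse bound $\alpha_\infty^D\le\alpha_\infty^{D-1}$, i.e. no numerical wall for $D$ can exceed the maximal wall for the odd integer $D-1$, which by Lemma~\ref{lem:Dodd} equals $\alpha_\infty^{D-1}=\sqrt{(D-1)^2-k'}$ with $k'\in\{0,2\}$ ($k'=2$ exactly when $D-1\equiv 3\pmod 6$, that is $D\equiv 4 \pmod 6$).

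For the reverse bound I would argue exactly as in the proof of Lemma~\ref{lem:Dodd}, starting from the fact that any numerical wall satisfies $\alpha_0^2=6e/c$ with $c>0$ and estimating this via \eqref{num2}: dividing $c(6e)\le\min\{4d^2,4(D-d)^2\}$ by $c^2$ and using $0<d<D$ from \eqref{num1} gives
$$
\alpha_0^2=\frac{6e}{c}\le\frac{4\min\{d,D-d\}^2}{c^2}\le\frac{D^2}{c^2}.
$$
The argument then splits on the integer $c\ge 1$. If $c\ge 2$, the estimate yields $\alpha_0^2\le D^2/4$, and since $D\ge 4$ one has the elementary inequality $D^2/4\le (D-1)^2-2$ (equivalent to $3D^2-8D-4\ge0$); as $(D-1)^2-2\le(D-1)^2-k'=(\alpha_\infty^{D-1})^2$, no wall with $c\ge 2$ reaches $\alpha_\infty^{D-1}$.

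The decisive case is $c=1$, and here the parity of $D$ enters through the integral conditions. Condition \eqref{int1} forces $d-\tfrac12\in\mathbb{Z}$, so $d$ is a genuine half-integer; since $D$ is even, $D/2$ is an integer, hence the peak $d=D/2$ of $\min\{4d^2,4(D-d)^2\}$ is forbidden. The admissible values closest to $D/2$ are $d=\frac{D-1}{2}$ and $d=\frac{D+1}{2}$, both giving $\min\{d,D-d\}=\frac{D-1}{2}$, so $\alpha_0^2=6e\le(D-1)^2$. Refining by \eqref{int3} (which forces $6e\equiv 1\pmod 6$) and running the same congruence computation as in Lemma~\ref{lem:Dodd} shows the largest admissible value is precisely $6e=(D-1)^2-k'$. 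This is realized by the character $(r,c,d,e)=\bigl(0,1,\tfrac{D-1}{2},\tfrac{(D-1)^2-k'}{6}\bigr)$, which satisfies \eqref{num1}--\eqref{num3} for $D$ (with $r=0$ the unique choice when $R=0$) and \eqref{int1}--\eqref{int3}, since it is literally the maximizing character for the odd integer $D-1$ treated in Lemma~\ref{lem:Dodd}. Hence $\alpha_\infty^D\le\alpha_\infty^{D-1}$, and combined with the first paragraph we get $\alpha_\infty^D=\alpha_\infty^{D-1}$; the range $D-2<\alpha_\infty^D\le D-1$ follows by applying Lemma~\ref{lem:Dodd} to the odd integer $D-1\ge 3$.

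The main obstacle is the $c=1$ case: one must recognize that for even $D$ the integrality constraint $d\in\mathbb{Z}+\tfrac12$ (from \eqref{int1} with odd $c$) blocks the optimum $d=D/2$, forcing the maximal wall down to the $(D-1)$-value, and then confirm that the extra factor $c^{-2}$ prevents any character with $c\ge 2$ from compensating. Verifying that the surviving candidate coincides with the odd-case maximizer — so that the already-checked integral conditions transfer verbatim — is the step that makes the equality clean rather than merely an inclusion of intervals.
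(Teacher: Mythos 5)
Your proof is correct and follows essentially the same route as the paper: the key point in both is that for even $D$ the peak $d=D/2$ of $\min\{4d^2,4(D-d)^2\}$ forces $c$ even via \eqref{int1}, hence $c\geq 2$ and $\alpha_0^2\leq D^2/4$, which is beaten by the $c=1$, $2d=D-1$ candidate realizing $\alpha_\infty^{D-1}$. Your write-up is in fact slightly more complete than the paper's (you explicitly bound all $c\geq 2$ and all half-integer $d$ in the $c=1$ case, and record the comparison $D^2/4\leq (D-1)^2-2$), but the underlying argument is identical.
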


\begin{proof}
    We know that $f$ reaches its maximum at $D/2$, but for $d=D/2$ condition \eqref{int1} implies that $c$ must be a positive even number, in particular $c\geq 2$ and so
    $$
    \frac{6e}{c}\leq \frac{D^2}{4}.
    $$
    However, taking $c=1$, $2d=D-1$ and $6e=(D-1)^2-k$ for some $k=0,\dots, 5$ for which all the integral conditions are met, we obtain a numerical wall with 
    $$
    \alpha_0^2=(D-1)^2-k\geq D^2/4,\ \ \text{for}\ D\geq 4.
    $$
    Thus, the numerical solutions that produce the maximum wall give exactly $\alpha_{\infty}^{D-1}$.
\end{proof}
\begin{corollary}
\label{corollary4.4}
If $\displaystyle \left(s+\frac{1}{6}\right)\alpha^2>\frac{D^2}{6}$ then every Gieseker semistable sheaf $E$ with $\ch(E)=(0,0,D,0)$ is $\lambda_{0,\alpha,s}$-semistable.
\end{corollary}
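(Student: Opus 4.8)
The plan is to read the statement off directly from Proposition \ref{prop4.1} together with the asymptotic description of the moduli space for $\alpha\gg 0$ recorded in \cite[Proposition 5.3]{vertical2023}. Recall from Section \ref{sec:prelim} that every numerical wall for $v=(0,0,D,0)$ is one of the disjoint nested hyperbolas $\left(s+\frac16\right)\alpha^2=\frac{\alpha_0^2}{6}$, parameterized by $\alpha_0>0$, and that the actual walls form a subset of these.

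First I would translate the hypothesis. The inequality $\left(s+\frac16\right)\alpha^2>\frac{D^2}{6}$ says precisely that $(\alpha,s)$ lies strictly above the hyperbola indexed by $\alpha_0=D$; a numerical wall with parameter $\alpha_0$ can meet the open region $\mathcal{U}:=\{(\alpha,s):(s+\tfrac16)\alpha^2>\tfrac{D^2}{6}\}$ only if $\alpha_0>D$. By Proposition \ref{prop4.1} applied with $R=0$ we have $\alpha_\infty^D\le D$, so no numerical wall---and hence no actual wall---meets $\mathcal{U}$.

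Next I would note that a Gieseker-semistable sheaf $E$ with $\ch(E)=(0,0,D,0)$ is a genuine object of $\mathcal{A}^{0,\alpha}$ for every $(\alpha,s)\in\mathcal{U}$, so that $\lambda_{0,\alpha,s}$-semistability is meaningful: being torsion of dimension one it has $\ch_0^0(E)=\ch_1^0(E)=0$, so its Mumford and tilt slopes are both $+\infty$, placing it in $\mathcal{T}_0$ and then in $\mathcal{T}_{0,\alpha}\subset\mathcal{A}^{0,\alpha}$. Since $\mathcal{U}$ is connected and contains points with $\alpha$ arbitrarily large (fix any $s>0$ and let $\alpha\to\infty$), I would then invoke \cite[Proposition 5.3]{vertical2023}: for $\alpha\gg 0$ the $\lambda_{0,\alpha,s}$-semistable objects of class $v$ are exactly the Gieseker-semistable sheaves. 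Because the set of $\lambda_{0,\alpha,s}$-semistable objects of a fixed class is constant on any connected region free of actual walls, $E$ remains $\lambda_{0,\alpha,s}$-semistable at every point of $\mathcal{U}$.

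The only delicate point---the main obstacle---is this last invocation of local constancy of semistability across the chamber $\mathcal{U}$. It rests on the variant of Bertram's Lemma cited in Section \ref{sec:prelim} (\cite[Lemma 5.1]{vertical2023}), which guarantees that a destabilizing subobject at one point of a wall destabilizes along the whole wall, together with the finiteness and nesting of the wall structure; the substance is to confirm that $\mathcal{U}$ is genuinely a single wall-free chamber, so that no new destabilizing subobject of $E$ can appear as $(\alpha,s)$ ranges over $\mathcal{U}$. Everything else is bookkeeping with the hyperbolic form of the walls.
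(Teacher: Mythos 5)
Your proposal is correct and follows essentially the same route the paper intends: the corollary is an immediate consequence of Proposition \ref{prop4.1} (with $R=0$), since every numerical wall is a hyperbola $\left(s+\frac{1}{6}\right)\alpha^2=\frac{\alpha_0^2}{6}$ with $\alpha_0\leq\alpha_\infty^D\leq D$, so the region in question is wall-free and lies in the outermost chamber, where the semistable objects are exactly the Gieseker semistable sheaves. The paper leaves this deduction implicit; your write-up merely spells out the chamber-constancy step.
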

\noindent
The bounds above give an idea of the maximum numerical wall that can occur. To obtain a maximum actual wall we should be able to give a short exact sequence in the category, and in general, to be able to find such sequence is hard. We present some extra conditions that a destabilizing subobject must satisfy if it destabilizes a one-dimensional sheaf (i.e., $R=0$). The following result uses some known arguments, but we present it here for completeness.  

\begin{proposition}\label{prop:r0c1}
   If $E\in \Coh^{0}(X)$ is a 1-dimensional Gieseker semistable coherent sheaf with $\ch(E)=(0,0,D,0)$ and $A\hookrightarrow E$ is a subobject in $\mathcal{A}^{0,\alpha}$ with $\ch_0(A)=0$ and $\ch_1(A)=1$ that destabilizes $E$ in the $(\alpha,s)$-plane, then $A$ is a coherent sheaf. In particular, if the maximal numerical wall is actual, then it is produced by a destabilizing rank zero coherent sheaf.
\end{proposition}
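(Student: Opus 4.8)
The plan is to locate $A$ precisely within the two tilts $\Coh^0(X)$ and $\mathcal{A}^{0,\alpha}$ and then to kill its negative cohomology sheaf by a $\Hom$-vanishing; this is the standard ``a subobject of minimal $\ch_1$ is a sheaf'' argument adapted to the double tilt. First I would record that $E\in\mathcal{A}^{0,\alpha}$: as a pure one-dimensional sheaf it has $\ch^0_1(E)=0$, so $\nu_{0,\alpha}(E)=+\infty$ and $E\in\mathcal{T}_{0,\alpha}$; in particular $E$ is concentrated in degree $0$ for the $\Coh^0(X)$-t-structure. The destabilizing inclusion gives a triangle $A\to E\to B\to A[1]$ with $A,B\in\mathcal{A}^{0,\alpha}$. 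Applying the cohomological functor $\mathcal{H}^{\bullet}_{\Coh^0}$ and using that objects of $\mathcal{A}^{0,\alpha}$ have $\Coh^0(X)$-cohomology only in degrees $-1,0$, the relevant stretch of the long exact sequence reads $0=\mathcal{H}^{-2}_{\Coh^0}(B)\to\mathcal{H}^{-1}_{\Coh^0}(A)\to\mathcal{H}^{-1}_{\Coh^0}(E)=0$, forcing $\mathcal{H}^{-1}_{\Coh^0}(A)=0$. Hence $A\in\Coh^0(X)$, and since it also lies in $\mathcal{A}^{0,\alpha}$ this means $A\in\mathcal{T}_{0,\alpha}$.

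Now $A\in\Coh^0(X)$, so it sits in the canonical sequence $0\to F[1]\to A\to Q\to 0$ in $\Coh^0(X)$, where $F:=\mathcal{H}^{-1}(A)\in\mathcal{F}_0$ is torsion-free with $\ch_1(F)\le 0$ and $Q:=\mathcal{H}^0(A)\in\mathcal{T}_0$ has $\ch_1(Q)\ge 0$; the goal is $F=0$. From $\ch_0(A)=0$ the ranks of $F$ and $Q$ coincide, and from $\ch_1(A)=\ch_1(Q)-\ch_1(F)=1$ together with integrality I get $(\ch_1(Q),-\ch_1(F))\in\{(1,0),(0,1)\}$. In the case $(0,1)$ one has $\ch_1(Q)=0$ with $Q\in\mathcal{T}_0$, which forces $Q$ to be zero-dimensional and hence of rank $0$; then $F$ is torsion-free of rank $0$, so $F=0$, contradicting $\ch_1(F)=-1$. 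Therefore $\ch_1(F)=0$.

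With $\ch_1(F)=0$ I have $\ch^0_1(F[1])=0$, and any object of $\Coh^0(X)$ with vanishing $\ch^0_1$ automatically lies in $\mathcal{T}_{0,\alpha}$ (each of its $\Coh^0$-quotients again has $\ch^0_1=0$, since $\ch^0_1$ is nonnegative on $\Coh^0(X)$, hence has infinite $\nu_{0,\alpha}$-slope). Thus $F[1]\in\mathcal{T}_{0,\alpha}$, and as $A,Q\in\mathcal{T}_{0,\alpha}$ as well, the sequence $0\to F[1]\to A\to Q\to 0$ is short exact in $\mathcal{A}^{0,\alpha}$. Composing the monomorphism $F[1]\hookrightarrow A$ with $A\hookrightarrow E$ exhibits $F[1]$ as a subobject of $E$ in $\mathcal{A}^{0,\alpha}$. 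But $\Hom_{D^b(X)}(F[1],E)=\Ext^{-1}_X(F,E)=0$ because $F$ and $E$ are sheaves, so this monomorphism is the zero morphism and $F[1]=0$. Consequently $A=Q=\mathcal{H}^0(A)$ is a genuine coherent sheaf of rank zero. The final assertion then follows from the lemmas proving Proposition~\ref{prop4.1}, which show that the maximal numerical wall is realized by a subobject with $\ch_0(A)=0$ and $\ch_1(A)=1$.

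The hard part, and the only place where care is genuinely required, is the bookkeeping across the two hearts in the last paragraph: one must check that $F[1]$ is a subobject of $A$ in $\mathcal{A}^{0,\alpha}$ and not merely in $\Coh^0(X)$. This is exactly where the reduction to $\ch_1(F)=0$ pays off, since it places $F[1]$ in the torsion class $\mathcal{T}_{0,\alpha}$ and makes the $\Coh^0$-sequence exact in $\mathcal{A}^{0,\alpha}$; once that identification is secured, the $\Ext^{-1}$-vanishing is immediate and the rest is routine.
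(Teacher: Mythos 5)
Your argument is correct, and the opening moves coincide with the paper's: both proofs first use the long exact sequence of $\Coh^0$-cohomologies of $A\to E\to B$ to place $A$ in $\Coh^0(X)$, then split $A$ as $0\to \mathcal{H}^{-1}(A)[1]\to A\to \mathcal{H}^0(A)\to 0$ and dispose of the case $\ch_1(\mathcal{H}^{-1}(A)[1])=1$ by the same rank/torsion-freeness argument. Where you genuinely diverge is in the remaining case $\ch_1(\mathcal{H}^{-1}(A))=0$, which is the heart of the proof. The paper works on the quotient side: it extracts from the destabilizing sequence the two $\Coh^0$-exact sequences $0\to B^{-1}\to A\to M\to 0$ and $0\to M\to E\to B^0\to 0$, identifies $\mathcal{H}^{-1}(B^{-1})\cong\mathcal{H}^{-1}(A)$, and derives a contradiction because $\mathcal{H}^{-1}(A)[1]$ has infinite tilt slope yet sits inside $B^{-1}\in\mathcal{F}_{0,\alpha}$. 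You instead stay on the subobject side: you use $\ch_1^0(F[1])=0$ to place $F[1]=\mathcal{H}^{-1}(A)[1]$ in $\mathcal{T}_{0,\alpha}$, promote $0\to F[1]\to A\to Q\to 0$ to a short exact sequence in $\mathcal{A}^{0,\alpha}$, and kill the resulting monomorphism $F[1]\hookrightarrow E$ with $\Hom_{D^b(X)}(F[1],E)=\Ext^{-1}(F,E)=0$. Your route avoids any analysis of $B$ and replaces the tilt-slope contradiction with an elementary $\Hom$-vanishing between sheaves, which is arguably more transparent; the paper's route has the side benefit of exhibiting the sheaf $M$ and the structure of the quotient, which is reused elsewhere. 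One small point you should make explicit: the claim $Q=\mathcal{H}^0(A)\in\mathcal{T}_{0,\alpha}$, which you need for the sequence to be exact in $\mathcal{A}^{0,\alpha}$ (otherwise the kernel of $F[1]\to A$ in $\mathcal{A}^{0,\alpha}$ would be the shift of the $\mathcal{F}_{0,\alpha}$-part of $Q$), follows because $\mathcal{T}_{0,\alpha}$ is a torsion class in $\Coh^0(X)$ and is therefore closed under quotients, $Q$ being a $\Coh^0$-quotient of $A\in\mathcal{T}_{0,\alpha}$. With that line added, your proof is complete.
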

\begin{proof}
   The long exact sequence of $\Coh^0(\mathbb{P}^3)$-cohomologies shows that $A\in \Coh^0(\mathbb{P}^3)$. Considering the exact sequence
$$ 
0\rightarrow \mathcal{H}^{-1}(A)[1] \rightarrow  A \rightarrow  \mathcal{H}^{0}(A) \rightarrow 0
$$
    in $\Coh^0(\mathbb{P}^3)$, we get that 
    $$
    0\leq \ch_1(\mathcal{H}^{-1}(A)[1])\leq 1.
    $$
    If $\ch_1(\mathcal{H}^{-1}(A)[1])=1$ then $\ch_1(\mathcal{H}^0(A))=0$. However, $\mathcal{H}^0(A)\in \mathcal{T}_0$ so this is only possible if $\ch_0(\mathcal{H}^0(A))=0$, i.e., if $\mathcal{H}^{-1}(A)=0$ and $A\in\Coh(X)$. On the other hand, consider the destabilizing sequence in $\mathcal{A}^{0,\alpha}$:
    $$
    0\rightarrow A\rightarrow E\rightarrow B\rightarrow 0
    $$
    and assume that $\ch_1(\mathcal{H}^{-1}(A)[1])=0$. Taking the long exact sequence of $\Coh^0(\mathbb{P}^3)$-cohomologies we get two exact sequences in $\Coh^0(\mathbb{P}^3)$:
    \begin{eqnarray}
    & 0\rightarrow B^{-1}\rightarrow A\rightarrow M\rightarrow 0,\label{short1}\\ 
    & 0\rightarrow M\rightarrow E\rightarrow B^0\rightarrow 0,\label{short2}
    \end{eqnarray}
    where $B^{-1}:=\mathcal{H}^{-1}_{\Coh^0}(B)\in \mathcal{F}_{0,\alpha}$ and $B^{0}:=\mathcal{H}^0_{\Coh^0}(B)\in\mathcal{T}_{0,\alpha}$. From the sequence \eqref{short2} we get that $M\in \mathcal{T}_0\subset \Coh(\mathbb{P}^3)$ and since $E$ is 1-dimensional, if $\mathcal{H}^{-1}(B^0)\neq 0$ we would have
    $$
    0\geq \mu_0(\mathcal{H}^{-1}(B^0))=\mu_0(M)>0,
    $$
    which is impossible. Therefore, $B^0\in \mathcal{T}_0$ and $M$ is a 1-dimensional sheaf.
    
    \vspace{0.3cm}
    \noindent
    Now, from the sequence \eqref{short1} we obtain $\mathcal{H}^{-1}(B^{-1})\cong \mathcal{H}^{-1}(A)$, which implies that $\mathcal{H}^{-1}(A)[1]$ is a subobject of $B^{-1}$ in $\Coh^0(\mathbb{P}^3)$, a contradiction if $\mathcal{H}^{-1}(A)\neq 0$ since $\nu_{0,\alpha}(\mathcal{H}^{-1}(A)[1])=+\infty$ and $B^{-1}\in \mathcal{F}_{0,\alpha}$. 

    \vspace{0.3cm}
    \noindent
    For the last part, notice that since in the outermost chamber the only Bridgeland semistable objects of Chern character $(0,0,D,0)$ are the 1-dimensional Gieseker semistable sheaves, then the outermost wall must be produced by a subobject $A\hookrightarrow E$ of a 1-dimensional sheaf $E$ in $\mathcal{A}^{0,\alpha}$. Thus, the result follows from the first part since the maximal numerical wall is given by a Chern character of the form $(0,1,d,e)$ as explained in the proof of Lemma \ref{lem:Dodd}.
\end{proof}
\noindent
We note that Proposition \ref{prop4.1} and Corollary \ref{corollary4.4} refer exclusively to the case $\beta=0$, but we can use a simpler argument to obtain another bound when $\beta\neq 0$. 

\begin{proposition}
\label{prop4.7}
    If $\displaystyle \left(s+\frac{1}{6}\right)\alpha^2>\frac{2D^2}{3}$ then every Gieseker semistable sheaf $E$ with $\ch(E)=(0,0,D,E)$ is $\lambda_{\beta,\alpha,s}$-semistable for $\beta=\frac{E}{D}$.
\end{proposition}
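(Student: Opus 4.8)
The plan is to reduce the statement to a purely numerical bound on the walls and then exploit the integrality of $\ch_0$ and $\ch_1$ together with the numerical conditions \eqref{num1}--\eqref{num3}. First I would normalize: for $\beta=\frac{E}{D}$ a direct computation with the formulas of Section~\ref{sec:prelim} gives $\ch^{\beta}(E)=(0,0,D,0)$, so we land exactly in the preferred slice with $R=0$, and \eqref{num1}, \eqref{num2}, \eqref{num3} govern any destabilizing subobject $A$ with $\ch^{\beta}(A)=(r,c,d,e)$. Recalling that every numerical wall is the hyperbola $\left(s+\frac16\right)\alpha^2=\frac{\alpha_0^2}{6}$ with $\alpha_0^2=\frac{6e}{c}$, that these walls are nested and disjoint, and that by \cite[Proposition~5.3]{vertical2023} the semistable objects in the outermost chamber are precisely the $1$-dimensional Gieseker semistable sheaves, it suffices to show that every numerical wall satisfies $\alpha_0^2<4D^2$. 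Indeed, the connected region $\left(s+\frac16\right)\alpha^2>\frac{2D^2}{3}$ would then lie above all walls, hence inside the outermost chamber, so the Gieseker semistable sheaf $E$ is $\lambda_{\beta,\alpha,s}$-semistable throughout it.

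To bound the walls I would argue by a dichotomy. Since a genuine finite wall has $\ch_1^{\beta}(A)\ne 0$, after possibly replacing $A$ by the quotient $B=E/A$ (whose twisted character $(-r,-c,D-d,-e)$ again satisfies the numerical conditions and defines the same $\alpha_0$) I may assume $c>0$, and then $6e=\alpha_0^2 c>0$. If $\alpha_0^2\le 2D$ there is nothing to prove, as $2D<4D^2$ for $D\ge 1$. Suppose instead $\alpha_0^2>2D$. Substituting $6e=\alpha_0^2 c$ into \eqref{num3} with $R=0$ yields
\[
-\frac{2(D-d)}{\alpha_0^2}\le r\le \frac{2d}{\alpha_0^2},
\]
and since \eqref{num1} gives $0<d<D$, once $\alpha_0^2>2D$ both bounds lie strictly between $-1$ and $1$. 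As $r=\ch_0(A)\in\mathbb{Z}$, this forces $r=0$.

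The key point is that $r=0$ upgrades $c$ to an integer: with $\ch_0(A)=r=0$ we have $c=\ch_1^{\beta}(A)=\ch_1(A)\in\mathbb{Z}$, hence $c\ge 1$. Feeding $c\ge 1$ back into \eqref{num2} gives
\[
\alpha_0^2=\frac{6e}{c}=\frac{c\,(6e)}{c^2}\le\frac{4d^2}{c^2}\le 4d^2<4D^2 .
\]
Thus every wall with $\alpha_0^2>2D$ already satisfies $\alpha_0^2<4D^2$, and together with the trivial case $\alpha_0^2\le 2D$ this yields $\alpha_0^2<4D^2$ for all numerical walls, as required. I expect the main obstacle to be the reduction step rather than the numerical estimate: one must verify that the absence of numerical walls in the region genuinely propagates to the actual semistable locus (using disjointness and nestedness of the walls together with the identification of the outermost chamber in \cite{vertical2023}), and that the $c\le 0$ case is truly covered by passing to the quotient. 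The integrality inputs $r=\ch_0(A)\in\mathbb{Z}$ and, once $r=0$, $c=\ch_1(A)\in\mathbb{Z}$ are the only arithmetic facts needed, and the rest of the estimate is routine.
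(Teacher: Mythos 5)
Your proof is correct and takes essentially the same approach as the paper: the integrality constraint from (N3) under the hypothesis $\alpha_0^2>2D$ forces $r=0$, hence $c=\ch_1(A)\in\mathbb{Z}$ with $c\ge 1$, and (N2) then gives $\alpha_0^2\le 4d^2<4D^2$; the paper arranges the identical ingredients as a contradiction starting from $\alpha_0>2D$ (deriving $0<c<1$ from (N2) and $c\in\mathbb{Z}$ from (N3)). The additional details you spell out --- reducing to $c>0$ by passing to the quotient, and propagating the absence of numerical walls to actual semistability via nestedness and the identification of the outermost chamber --- are implicit in the paper's setup.
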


\begin{proof}
    Let us suppose that a wall occurs for an $\alpha_0 > 2D$. Then combining inequalities \eqref{num1}, \eqref{num2} and the definition of $\alpha_0$ we get 
    \[
        4c^2d^2 < 4D^2 c^2 < c(6e) \leq \min \{4d^2, 4(D-d)^2\} \leq 4d^2.
    \]
    Thus $0<c<1$. However, since $\alpha_0^2>2D$ then inequality \eqref{num3} tells us that $r=0$ and so $c\in \mathbb{Z}$, a contradiction.
\end{proof}
\noindent
The last sentence of the proof above gives a point after which every actual wall must come from a destabilizing subobject with rank $0$. More precisely, we have the following:

\begin{corollary}
\label{cor4.8}
    Suppose that $\ch(E)=(0,0,D,E)$ and let $A$ be a destabilizing subobject of $E$ which generates a wall in the region $\displaystyle \left(s+\frac{1}{6}\right)\alpha^2>\frac{D}{3}$ inside the slice $\beta=\frac{E}{D}$, then $\ch_0(A)=0$. 
\end{corollary}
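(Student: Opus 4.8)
The plan is to reduce the statement to the numerical inequality \eqref{num3} evaluated on the destabilizing subobject, after translating the region $\left(s+\frac16\right)\alpha^2 > D/3$ into a lower bound on the parameter $\alpha_0$. First I would record that substituting $\beta=\ch_3(E)/D$ into the formulas for the twisted Chern character makes it vanish in degrees $0$, $1$ and $3$, giving $\ch^\beta(E)=(0,0,D,0)$. Hence we are exactly in the $R=0$ instance of $v=(-R,0,D,0)$, and the numerical conditions \eqref{num1}--\eqref{num3} apply to any destabilizing subobject $A$ with $\ch^\beta(A)=(r,c,d,e)$ and $c=\ch_1^\beta(A)>0$ (the standing hypothesis under which they were derived; a subobject with $c\le 0$ may be replaced by its quotient, which produces the same wall and satisfies $\ch_0=-\ch_0(A)$, so the conclusion is unaffected).

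Next I would translate the region into a numerical bound. Along the wall parameterized by $\alpha_0^2=6e/c$ the quantity $\left(s+\frac16\right)\alpha^2$ is constant and equal to $\alpha_0^2/6$, so the hypothesis that the wall meets the region $\left(s+\frac16\right)\alpha^2 > D/3$ is precisely the inequality $\alpha_0^2>2D$.

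The main computation is to feed this bound into \eqref{num3}. With $R=0$ and $6e/c=\alpha_0^2$, condition \eqref{num3} reads
$$
-\frac{2(D-d)}{\alpha_0^2}\;\le\; r \;\le\; \frac{2d}{\alpha_0^2}.
$$
By \eqref{num1} we have $0<d<D$, hence $0<2d<2D<\alpha_0^2$ and $0<2(D-d)<2D<\alpha_0^2$; thus both outer bounds lie strictly between $-1$ and $1$, forcing $-1<r<1$. Since $r=\ch_0^\beta(A)=\ch_0(A)$ is an integer, we conclude $r=0$, which is the assertion.

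I expect no serious obstacle: the argument is essentially the last sentence of the proof of Proposition~\ref{prop4.7}, isolated and applied over the larger region $\alpha_0^2>2D$ rather than $\alpha_0^2>4D^2$. The only delicate point is the reduction to $c>0$, which must be arranged so that the conclusion about $\ch_0(A)$ survives passage to the quotient.
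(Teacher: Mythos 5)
Your argument is correct and is exactly the paper's: Corollary \ref{cor4.8} is deduced from the last sentence of the proof of Proposition \ref{prop4.7}, namely that the region $\left(s+\frac{1}{6}\right)\alpha^2>\frac{D}{3}$ corresponds to $\alpha_0^2>2D$, which via \eqref{num1} and \eqref{num3} (with $R=0$) forces $-1<r<1$ and hence $r=0$ by integrality. Your extra care about the normalization $c>0$ is a detail the paper leaves implicit, but it does not change the route.
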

\noindent
We present the phenomena of Corollary \ref{cor4.8} and Proposition \ref{prop4.1} in Figure \ref{fig:5}. 
\begin{figure}
        \centering
        \includegraphics[width=0.5\linewidth]{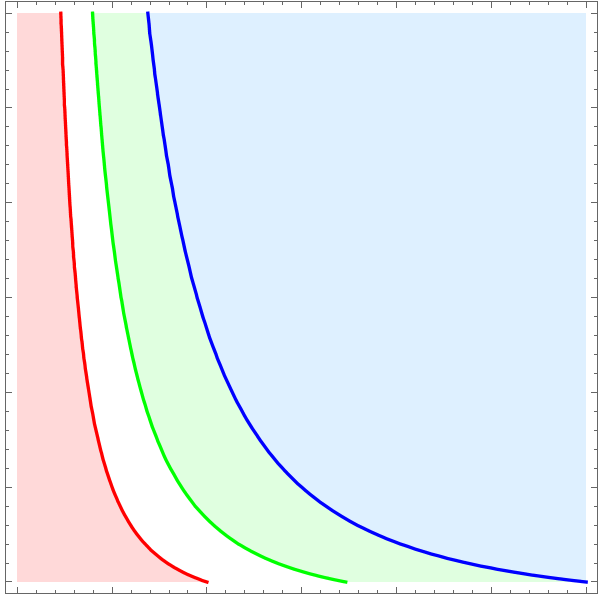}
        \caption{The general figure of walls in $\beta=0$. The red curve represents the killing wall and the blue curve represents the maximal numerical wall, after which the Bridgeland moduli space is the Gieseker moduli space, and in the green region the destabilizing objects have rank $0$.}
        \label{fig:5}
    \end{figure}

\begin{remark}
    In particular, this situation works for any Fano threefold, as the computations are merely numerical. Note that the only condition where we used that we are seated on $\mathbb{P}^3$ was in the integral conditions (\ref{int3} and \ref{eqibeta3}) which come from the Euler characteristic of $\mathbb{P}^3$. Thus, it is also possible to find the maximal numerical wall for those cases.
\end{remark}    
\noindent
We end this Section with a compilation of the results so far.

\renewcommand\proofname{Proof of Theorem \ref{thm1.1}}
\begin{proof}
    The first part is just a restatement of Proposition \ref{prop4.7}. Now, Proposition \ref{prop3.3} assures that for $D=4$ and $\beta=0$ there are three actual walls, and Lemma \ref{lemma3.1} implies their existence for all $D$. The new bound in this case is given by Proposition \ref{prop4.1}. 
\end{proof}


\section{Low degree examples}\label{sec:examples}

\subsection{Degree 3}
It was shown in \cite{vertical2023} that for $D=3$ and $\beta=0$ there are two actual walls: the killing wall $\alpha_0=1$ and $\alpha_0 = \sqrt{7}$. 
In particular, Table \ref{table:1} was obtained, describing all the Chern characters that appeared with its corresponding walls and possible destabilizing sequences.
\begin{table}[h!]
\caption{Numerical walls for $(0,0,3,0)$ and possible destabilizing sequences. From \cite[Example 6.15]{vertical2023}.}\label{table:1}
\begin{tabular}{ccc}
\hline
$\ch(A)$                        & $\alpha_0$ & Possible Destabilizing Sequence \\ \hline
$\left(r,1,\frac{3}{2},\frac{7}{6}\right)$                                &   $\sqrt{7} $        & $$
\begin{tikzcd}[column sep=small,ampersand replacement=\&]
    0 \arrow{r} \& \mathcal{O}_H(2) \arrow{r} \& \mathcal{O}_C(2) \arrow{r} \& \mathcal{O}_H(-1)[1] \arrow{r} \& 0
\end{tikzcd}
$$ with $\deg C = 3$                     \\[0.2cm]
$\left(r,1,\frac{1}{2},\frac{1}{6}\right)$& 1 &  $$
\begin{tikzcd}[column sep=small,ampersand replacement=\&]
    0 \arrow{r} \& \mathcal{O} (1) \arrow{r} \& E \arrow{r}  \& \mathcal{R}[1] \arrow{r} \& 0 
\end{tikzcd}
$$ \\[0.2cm]
$\left(r,2,1,\frac{1}{3}\right)$                                &   1         &     $$
\begin{tikzcd}[column sep=small, ampersand replacement=\&]
    0 \arrow{r} \& \mathcal{O}(1)^{\oplus 2} \arrow{r} \& E \arrow{r}  \& \mathcal{S}[1] \arrow{r} \& 0
\end{tikzcd}
$$  \\
$\left(r,3,\frac{3}{2},\frac{1}{2}\right)$ & 1          & $$
\begin{tikzcd}[column sep=small, ampersand replacement=\&]
0 \arrow{r} \& \mathcal{O} (1)^{\oplus 3} \arrow{r} \& E \arrow{r}  \& \mathcal{K}[1] \arrow{r} \& 0
\end{tikzcd}
$$\\[0.2cm]
$\left(r,1,\frac{5}{2},\frac{1}{6}\right)$& 1           & $$
\begin{tikzcd}[column sep=small, ampersand replacement=\&]
0 \arrow{r} \& \mathcal{R}^{\vee}[1]  \arrow{r} \& E^{\vee}[2] \arrow{r} \& \mathcal{O}(-1)[2] \arrow{r} \& 0
\end{tikzcd}
$$\\[0.2cm]
$\left(r,2,2,\frac{1}{3}\right)$ &    1        & $$
\begin{tikzcd}[column sep=small, ampersand replacement=\&]
0 \arrow{r} \& \mathcal{S}^{\vee}_4[1]  \arrow{r} \& E^{\vee}[2] \arrow{r} \& \mathcal{O}(-1)^{\oplus 2}[2] \arrow{r} \& 0\end{tikzcd}
$$\\[0.2cm]
$\left(r,1,\frac{3}{2},\frac{1}{6}\right)$ &   1   & $$
\begin{tikzcd}[column sep=small,ampersand replacement=\&]
    0 \arrow{r} \& \mathcal{I}_{p,H} (2) \arrow{r} \& E \arrow{r}  \& 
    U \arrow{r} \& 0 
\end{tikzcd}
$$                        \\ \hline
\end{tabular}
\end{table}

\vspace{0.2cm}
\noindent
Now, when $\beta=\frac{1}{3}$ and $\ch^{\beta}=(0,0,3,0)$, we get $\ch=(0,0,3,1)$ and thus the Bridgeland moduli space in the outermost chamber is no other than the moduli space $\mathcal{M}_{3t+7}$. By \cite[Theorem 1.1]{cubics2004}, we know that any sheaf $\mathcal{F}$ in $\mathcal{M}_{3t+7}$ is equivalent to a complex
\[
\mathcal{F}\stackrel{\text{qis}}{\cong}
\left[
    \mathcal{O}_{\mathbb{P}^3}(-1)^{\oplus 2} \rightarrow \mathcal{O}_{\mathbb{P}^3}(1) \oplus \mathcal{O}_{\mathbb{P}^3}^{\oplus 3} \rightarrow \mathcal{O}_{\mathbb{P}^3}(2) \oplus \mathcal{O}_{\mathbb{P}^3}(1)
\right],
\]
and therefore, has $\mathcal{O}_{\mathbb{P}^3} (1)$ as a subobject in the category $\mathcal{A}^{\beta,\alpha}$. Thus, we have the following:
\begin{lemma}
\label{lemma5.2}
    Let $\beta=\frac{1}{3}$ and $\ch^\beta = (0,0,3,0)$. Any wall in the $(\alpha,s)$-slice destabilizing a Gieseker semistable sheaf must lie in the region
    \[
    \bigg\{
    (\alpha,s) \, \bigg| \, \left( s+\frac{1}{6} \right)\alpha^2 > \frac{2}{27}
    \bigg\}.
    \]
    In particular, $\alpha_0 \geq 1-\beta = \frac{2}{3}$. 
\end{lemma}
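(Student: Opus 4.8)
The plan is to exploit the universal subobject $\mathcal{O}_{\mathbb{P}^3}(1)\hookrightarrow \mathcal{F}$ in $\mathcal{A}^{\beta,\alpha}$ that the structure result of \cite{cubics2004} (recalled immediately before the statement) attaches to every $\mathcal{F}\in\mathcal{M}_{3t+7}$, and to show that the numerical wall it produces, which I claim sits at $\alpha_0=1-\beta=\tfrac{2}{3}$, is in fact a killing wall: below it, $\mathcal{O}(1)$ already strictly destabilizes $\mathcal{F}$, so no wall destabilizing a Gieseker semistable sheaf can occur there.

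First I would compute the twisted character of the subobject. Using the change-of-basis matrix of Section 3.1 with $\beta=\tfrac13$ one gets $\ch^{\beta}(\mathcal{O}(1))=(1,\tfrac23,\tfrac29,\tfrac{4}{81})$; more generally $\ch_1^{\beta}(\mathcal{O}(1))=1-\beta$ and $\ch_3^{\beta}(\mathcal{O}(1))=\tfrac{(1-\beta)^3}{6}$. By the formula for numerical walls recalled in Section 3.1, the wall produced by $\mathcal{O}(1)$ is
\[
\left(s+\tfrac16\right)\alpha^2=\frac{\ch_3^{\beta}(\mathcal{O}(1))}{\ch_1^{\beta}(\mathcal{O}(1))}=\frac{(1-\beta)^2}{6},
\]
so $\alpha_0=1-\beta$, which equals $\tfrac23$ and gives $(s+\tfrac16)\alpha^2=\tfrac{2}{27}$ when $\beta=\tfrac13$; this already explains the shape $\alpha_0\geq 1-\beta$ of the asserted bound.

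Next I would verify the destabilization below this wall. In the admissible half-plane $s>0$ one has $s+\tfrac16>\tfrac16$, so the locus $(s+\tfrac16)\alpha^2<\tfrac{2}{27}$ forces $\tfrac{\alpha^2}{6}<\tfrac{2}{27}$, hence $\alpha^2<\tfrac49$ and $\alpha<\tfrac23$. There $\nu_{\beta,\alpha}(\mathcal{O}(1))=\tfrac{2/9-\alpha^2/2}{2/3}>0$, so $\mathcal{O}(1)\in\mathcal{T}_{\beta,\alpha}\subset\mathcal{A}^{\beta,\alpha}$ and the map $\mathcal{O}(1)\hookrightarrow\mathcal{F}$ is an honest monomorphism in the heart with cokernel in $\mathcal{A}^{\beta,\alpha}$. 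Comparing Bridgeland slopes, $\lambda_{\beta,\alpha,s}(\mathcal{F})=0$ since $\ch_1^{\beta}(\mathcal{F})=\ch_3^{\beta}(\mathcal{F})=0$, whereas on this side of the wall the numerator $\ch_3^{\beta}(\mathcal{O}(1))-(s+\tfrac16)\alpha^2\,\ch_1^{\beta}(\mathcal{O}(1))$ is positive and the denominator $\ch_2^{\beta}(\mathcal{O}(1))-\tfrac{\alpha^2}{2}\ch_0^{\beta}(\mathcal{O}(1))=\tfrac29-\tfrac{\alpha^2}{2}$ is positive, so $\lambda_{\beta,\alpha,s}(\mathcal{O}(1))>0=\lambda_{\beta,\alpha,s}(\mathcal{F})$. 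Thus $\mathcal{O}(1)$ destabilizes $\mathcal{F}$ throughout $(s+\tfrac16)\alpha^2<\tfrac{2}{27}$, no wall destabilizing a Gieseker semistable sheaf can lie strictly below $\alpha_0=\tfrac23$, and the bound $\alpha_0\geq 1-\beta$ follows, with the $\mathcal{O}(1)$-wall itself realizing the boundary.

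The main obstacle I anticipate is bookkeeping rather than conceptual: one must ensure that $\mathcal{O}(1)$ is genuinely a subobject (a monomorphism) of $\mathcal{F}$ in $\mathcal{A}^{\beta,\alpha}$ throughout the relevant range of $\alpha$ — precisely where the explicit resolution of \cite{cubics2004} is needed — and that the tilt-slope sign keeps $\mathcal{O}(1)$ inside the heart (rather than its shift $\mathcal{O}(1)[1]$) exactly on the side of the wall where the slope comparison runs in the destabilizing direction. Both sign conditions $\nu_{\beta,\alpha}(\mathcal{O}(1))>0$ and $\lambda_{\beta,\alpha,s}(\mathcal{O}(1))>0$ hinge on $\alpha<\tfrac23$, so the argument is clean precisely because the killing wall $\alpha_0=\tfrac23$ is simultaneously the locus $\nu_{\beta,\alpha}(\mathcal{O}(1))=0$ in this slice.
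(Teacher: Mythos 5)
Your proposal is correct and takes essentially the same route as the paper: both arguments rest on the Freiermuth--Trautmann resolution giving $\mathcal{O}_{\mathbb{P}^3}(1)$ as a subobject of every $\mathcal{F}\in\mathcal{M}_{3t+7}$ in $\mathcal{A}^{\beta,\alpha}$, together with the wall equation $\left(s+\tfrac{1}{6}\right)\alpha^2=\ch_3^{1/3}(\mathcal{O}(1))/\ch_1^{1/3}(\mathcal{O}(1))=\tfrac{2}{27}$, i.e.\ $\alpha_0=1-\beta$. The paper leaves the slope comparison and the heart-membership check ($\nu_{\beta,\alpha}(\mathcal{O}(1))>0$ below the wall) implicit; you supply them, correctly.
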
 
Before describing the possible walls, it is convenient to identify a couple of familiar objects in the category $\mathcal{A}^{\beta,\alpha}$:
\begin{lemma}\label{lemma:familiaarobjects}
    Let $Z\subset H\subset \mathbb{P}^3$ be a zero-dimensional subscheme supported on a plane $H$. Then
    \begin{enumerate}
        \item $\mathcal{O}_H(-k),\mathcal{O}_Z\in\mathcal{T}_{\beta}$ for any $\beta$;
        \item $\mathcal{O}_Z$ is $\nu_{\beta,\alpha}$-semistable for any $\beta$ and $\alpha>0$. Moreover, $\mathcal{O}_Z\in\mathcal{T}_{\beta,\alpha}\subset \mathcal{A}^{\beta,\alpha}$;
        \item $\mathcal{O}_H(-k)$ is $\nu_{\beta,\alpha}$-semistable for any $\alpha>0$ as long as $\beta\geq -k$ or $\beta\leq -k-1$;
        \item $\mathcal{O}_H(-k)\in \mathcal{F}_{\beta,\alpha}$ for any $\alpha>0$ as long as $\beta\geq -k$. In particular, $\mathcal{O}_H(-k)[1]\in \mathcal{A}^{\beta,\alpha}$ for $\beta\geq -k$;
        \item If $\mathcal{I}_{Z,H}$ denotes the ideal of $Z$ on the plane $H$, then the object ${E_{Z,k}}=R\mathcal{H}om(\mathcal{I}_{Z,H}(k),\mathcal{O}_H)[1]\in \mathcal{A}^{\beta,\alpha}$ for $\beta\geq -k$ and any $\alpha>0$.
    \end{enumerate}
\end{lemma}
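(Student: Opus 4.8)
The plan is to take the five parts in order, using the earlier ones to bootstrap the later ones: parts (1), (2) and (4) are short, part (5) is a formal consequence of (2) and (4) plus a duality computation, and part (3) carries essentially all the work. For (1) the point is only that $\mathcal{O}_H(-k)$ and $\mathcal{O}_Z$ are torsion: every nonzero quotient of a torsion sheaf is torsion, hence has $\ch_0=0$ and $\mu_\beta=+\infty>0$, which is exactly the defining condition for $\mathcal{T}_\beta$. For (2) I would compute $\ch^\beta(\mathcal{O}_Z)=(0,0,0,\ell)$, so $\ch_1^\beta(\mathcal{O}_Z)=0$ and $\nu_{\beta,\alpha}(\mathcal{O}_Z)=+\infty$; semistability is then immediate since no object can exceed tilt-slope $+\infty$. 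To place $\mathcal{O}_Z$ in $\mathcal{T}_{\beta,\alpha}$ I would run the long exact sequence of $\Coh(X)$-cohomology on a quotient $0\to K\to\mathcal{O}_Z\to Q\to0$ in $\Coh^\beta(X)$: since $\mathcal{O}_Z\in\mathcal{T}_\beta$ is a sheaf with only $0$-dimensional sub- and quotient sheaves, one checks $\mathcal{H}^{-1}(Q)=0$, so every quotient $Q$ is a $0$-dimensional sheaf with $\nu_{\beta,\alpha}(Q)=+\infty>0$.

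For (3) I would start from the resolution $0\to\mathcal{O}(-k-1)\to\mathcal{O}(-k)\to\mathcal{O}_H(-k)\to0$ and from $\ch^\beta(\mathcal{O}_H(-k))=(0,1,-k-\tfrac12-\beta,\ast)$, which gives $\nu_{\beta,\alpha}(\mathcal{O}_H(-k))=-k-\tfrac12-\beta$, independent of $\alpha$ because $\ch_0=0$. The line bundles $\mathcal{O}(-k)$ and $\mathcal{O}(-k-1)$ are $\nu_{\beta,\alpha}$-stable for every $(\beta,\alpha)$, having vanishing discriminant and hence no tilt-walls (cf. \cite{macrischmidt19}). Depending on $\beta$, the resolution exhibits $\mathcal{O}_H(-k)$ either as a quotient of $\mathcal{O}(-k)$ in $\Coh^\beta(X)$ (when $\mathcal{O}(-k)\in\mathcal{T}_\beta$) or, after rotation, as a subobject via $0\to\mathcal{O}_H(-k)\to\mathcal{O}(-k-1)[1]\to\mathcal{O}(-k)[1]\to0$ (when $\mathcal{O}(-k-1)\in\mathcal{F}_\beta$). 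I would then try to rule out a destabilizing subobject $A\hookrightarrow\mathcal{O}_H(-k)$ by taking its image or preimage along the relevant line-bundle map and comparing tilt-slopes using the semistability of the line bundles. The excluded band $-k-1<\beta<-k$ is precisely where $\mathcal{O}(-k)$ and $\mathcal{O}(-k-1)$ fall on opposite sides of the torsion pair, so that $\mathcal{O}(-k)\hookrightarrow\mathcal{O}_H(-k)$ becomes an actual destabilizer for small $\alpha$; this is what forces its omission.

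The hard part will be closing these slope inequalities. Because the tilt-slope is not additive, the slope of the extension built from $A$ and a line bundle can a priori sit on either side of $\nu_{\beta,\alpha}(\mathcal{O}_H(-k))$, so the bare semistability of $\mathcal{O}(-k)$ does not suffice. I expect to need the vanishing of the discriminants of $\mathcal{O}(-k)$ and $\mathcal{O}(-k-1)$ (equivalently the support property encoded by $Q_{\beta,\alpha,K}$) to pin down the destabilizer's $\ch_2^\beta$ and reach a contradiction; the casework splits according to whether the cohomology sheaf $\mathcal{H}^{-1}$ of the quotient vanishes, and the nonvanishing case -- a genuinely positive-rank subobject of the tilted heart, not supported on $H$ -- is the one demanding real care, together with the boundary value $\beta=-k-1$, where the splitting occurs yet fails to destabilize because the relevant quotient has $\ch_1^\beta=0$. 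An alternative I would keep in reserve is to deduce (3) from a pushforward principle: $\mathcal{O}_{\mathbb{P}^2}(-k)$ is tilt-stable for every stability condition on the plane, and $\iota_*$ preserves tilt-(semi)stability for the matching parameters exactly on the two good ranges, in the spirit of Remark \ref{p2p3correspondence}.

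Granting (3), part (4) is immediate: for $\beta\ge -k$ one has $\nu_{\beta,\alpha}(\mathcal{O}_H(-k))=-k-\tfrac12-\beta\le-\tfrac12<0$, so by semistability every subobject of $\mathcal{O}_H(-k)$ in $\Coh^\beta(X)$ has tilt-slope $\le0$, which is the defining condition for $\mathcal{O}_H(-k)\in\mathcal{F}_{\beta,\alpha}$; hence $\mathcal{O}_H(-k)[1]\in\mathcal{A}^{\beta,\alpha}$. Finally, for (5) I would apply $R\mathcal{H}om(-,\mathcal{O}_H)$ on the plane $H$ to the sequence $0\to\mathcal{I}_{Z,H}(k)\to\mathcal{O}_H(k)\to\mathcal{O}_Z\to0$; a routine duality computation on $\mathbb{P}^2$ yields $\mathcal{H}^{-1}(E_{Z,k})=\mathcal{O}_H(-k)$ and $\mathcal{H}^0(E_{Z,k})$ a $0$-dimensional sheaf $\mathcal{O}_W$, so that $E_{Z,k}$ fits in a triangle $\mathcal{O}_H(-k)[1]\to E_{Z,k}\to\mathcal{O}_W\to\mathcal{O}_H(-k)[2]$. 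Since $\mathcal{O}_H(-k)[1]\in\mathcal{A}^{\beta,\alpha}$ for $\beta\ge -k$ by (4) and $\mathcal{O}_W\in\mathcal{T}_{\beta,\alpha}\subset\mathcal{A}^{\beta,\alpha}$ by (2), and $\mathcal{A}^{\beta,\alpha}$ is the heart of a t-structure and hence extension-closed, $E_{Z,k}$ is an extension of two objects of $\mathcal{A}^{\beta,\alpha}$ and therefore lies in $\mathcal{A}^{\beta,\alpha}$.
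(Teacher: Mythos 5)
Parts (1), (2), (4) and (5) of your proposal are correct and coincide with the paper's proof: torsion forces membership in $\mathcal{T}_{\beta}$; $\nu_{\beta,\alpha}(\mathcal{O}_Z)=+\infty$ and the cohomology long exact sequence handle (2); the computation $\nu_{\beta,\alpha}(\mathcal{O}_H(-k))=-\left(k+\beta+\tfrac{1}{2}\right)\leq -\tfrac{1}{2}<0$ for $\beta\geq -k$ together with (3) gives (4); and your treatment of (5) is exactly the paper's argument, namely the truncation triangle $\mathcal{O}_H(-k)[1]\to E_{Z,k}\to \mathcal{O}_Z$ combined with extension-closedness of the heart $\mathcal{A}^{\beta,\alpha}$.

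The gap is part (3), which is the only part of the lemma that requires a genuine argument, and what you offer there is a plan rather than a proof: you state explicitly that you do not know how to close the slope inequalities, that the positive-rank case ``demands real care,'' and that the boundary value $\beta=-k-1$ is unresolved. The resolution-based casework you outline is unnecessary, and the missing idea is one you already half-invoke for the line bundles but fail to apply to $\mathcal{O}_H(-k)$ itself. By the estimate on the radius of tilt-walls \cite[Lemma 7.2]{macrischmidt19}, every tilt-wall for the class $\ch(\mathcal{O}_H(-k))=\left(0,1,-k-\tfrac{1}{2},\ast\right)$ is a semicircle in the $(\beta,\alpha)$-plane centered at $\left(-k-\tfrac{1}{2},0\right)$ with radius at most $\sqrt{\Delta(\mathcal{O}_H(-k))}/2=\tfrac{1}{2}$, hence contained in the band $-k-1<\beta<-k$. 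Since $\mathcal{O}_H(-k)$ is $2$-Gieseker semistable, it is $\nu_{\beta,\alpha}$-semistable for $\alpha\gg 0$, and on each vertical line $\beta\geq -k$ or $\beta\leq -k-1$ there are no walls to cross as $\alpha$ decreases, so semistability persists for all $\alpha>0$ there. As written, your part (3) --- and hence part (4), which depends on it --- is not established; replacing your paragraph on (3) with this wall-radius argument repairs the proof.
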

\renewcommand{\proofname}{Proof}
\begin{proof}
    Parts (1) and (2) follow easily since $\mathcal{O}_H(-k)$ and $\mathcal{O}_Z$ are torsion sheaves and $\nu_{\beta,\alpha}(\mathcal{O}_Z)=+\infty$. For part (3), notice that the tilt-walls in the $(\beta,\alpha)$-plane for the Chern character $\ch(\mathcal{O}_H(-k))$ are semicircles with center $\left(-k-\frac{1}{2},0\right)$ and with radii bounded by
    $$
    \frac{\sqrt{\Delta(\mathcal{O}_H(-k))}}{2}=\frac{1}{2}
    $$
    (see \cite[Lemma 7.2]{macrischmidt19}). Thus,  $\mathcal{O}_H(-k)$ is $\nu_{\beta,\alpha}$-semistable for $\beta\geq -k$ or $\beta\leq -k-1$ since it is a 2-Gieseker semistable sheaf. For part (4) we only need to notice that
    $$
    \nu_{\beta,\alpha}(\mathcal{O}_H(-k))=-\left(k+\beta+\frac{1}{2}\right),
    $$
    and in particular $\nu_{\beta,\alpha}(\mathcal{O}_H(-k))<0$ for $\beta\geq k$. For part (5), first notice that
    $$
    \mathcal{H}^j(E_{Z,k})=\begin{cases}
        \mathcal{O}_H(-k) & if\ j=-1\\
        \mathcal{O}_Z & if\ j=0\\
        0 & \text{otherwise}.
    \end{cases}
    $$
    Thus, $\mathcal{H}^{-1}_{\Coh^{\beta}}({E_{Z,k}})=\mathcal{H}^{-1}(E_{Z,k})\in \mathcal{T}_{\beta}$ and $\mathcal{H}^{0}_{\Coh^{\beta}}({E_{Z,k}})=\mathcal{H}^{0}(E_{Z,k})\in \mathcal{T}_{\beta}$. Moreover, by (4) we obtain that for $\beta\geq -k$ the complex $E_{Z,k}$ is the object in $\mathcal{A}^{\beta,\alpha}$ given by the extension
    $$
    0\rightarrow \mathcal{O}_H(-k)[1]\rightarrow E_{Z,k}\rightarrow \mathcal{O}_Z\rightarrow 0.
    $$
\end{proof}
\begin{proposition}
\label{prop5.3}
    Let $\ch^{\beta}=(0,0,3,0)$. If $\beta=\frac{1}{3}$, the boundary of the Gieseker chamber is the wall $\alpha_0=\sqrt{\dfrac{13}{3}}$, given by the destabilizing subobject $\mathcal{O}_H(2)$. 
\end{proposition}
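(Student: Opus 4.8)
The boundary of the Gieseker chamber is the largest actual wall, so the plan is to show two things: (i) $\alpha_0=\sqrt{13/3}$ is the largest \emph{numerical} wall in the $(\alpha,s)$-slice, and (ii) this wall is actual, realized with $\mathcal{O}_H(2)$ as destabilizing subobject. Since \cite[Proposition 5.3]{vertical2023} guarantees that for $\alpha\gg 0$ the Bridgeland-semistable objects of this character are exactly the Gieseker-semistable sheaves, there are no walls above the largest numerical one; hence once (i) and (ii) hold, the entire region above $\sqrt{13/3}$ is the Gieseker chamber and this wall is its boundary.

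First I would record the Chern data. The formula of Remark \ref{p2p3correspondence} applied to $\ch(\mathcal{O}_{\mathbb{P}^2}(2))=(1,2,2)$ gives $\ch(\mathcal{O}_H(2))=(0,1,\tfrac32,\tfrac76)$, and twisting by $\beta=\tfrac13$ yields $\ch^{\beta}(\mathcal{O}_H(2))=(0,1,\tfrac76,\tfrac{13}{18})$. Since any destabilizer satisfies $\alpha_0^2=6\,\ch_3^\beta/\ch_1^\beta$, this produces precisely the numerical wall $\alpha_0^2=6\cdot\tfrac{13}{18}=\tfrac{13}{3}$, and one checks directly that $(0,1,\tfrac76,\tfrac{13}{18})$ satisfies \eqref{eqnbeta1}–\eqref{eqnbeta3} and \eqref{eqibeta1}–\eqref{eqibeta3}.

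For maximality, write $c:=\ch_1^\beta(A)\in\tfrac13\mathbb{Z}_{>0}$ and recall $\alpha_0^2=6e/c$ with $e=\ch_3^\beta(A)$. Condition \eqref{num2} gives $\alpha_0^2\le\min\{4d^2,4(3-d)^2\}/c^2\le 9/c^2$, so only $c\in\{\tfrac13,\tfrac23,1,\tfrac43\}$ (those with $c^2<\tfrac{27}{13}$) can possibly exceed $\tfrac{13}{3}$. For each such $c$, the integrality $\ch_0(A)\in\mathbb{Z}$ and $\ch_1(A)=c+\tfrac13\ch_0(A)\in\mathbb{Z}$, together with \eqref{num3} and the integral conditions \eqref{eqibeta1}–\eqref{eqibeta3}, leave only finitely many candidate vectors $(r,c,d,e)$; a direct enumeration—precisely the finite search performed by the algorithm of the Appendix—shows that none with $c\neq 1$ reaches $\tfrac{13}{3}$, and that for $c=1$ the maximum of $6e$ is attained exactly at $(d,e)=(\tfrac76,\tfrac{13}{18})$, giving $\alpha_0^2=\tfrac{13}{3}$. (I would stress the conceptual reduction to four values of $c$ and defer the remaining enumeration to the algorithm, since the integral conditions, not the numerical ones, do the decisive cutting.)

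For actualness I would pass to $\mathbb{P}^2$ via Remark \ref{p2p3correspondence}: the untwisted character $\ch(E)=(0,0,3,1)$ corresponds to $w=(0,3,\tfrac52)$ with $\bar\beta=\tfrac13$, the character of a stable sheaf $\mathcal{F}$ of Hilbert polynomial $3t+7$ supported on a plane cubic $C$. Such $\mathcal{F}$ is destabilized at its outermost tilt wall by $\mathcal{O}_{\mathbb{P}^2}(2)$ through
\[
0\to\mathcal{O}_{\mathbb{P}^2}(2)\to\mathcal{F}\to\mathcal{I}_p(-1)[1]\to 0
\]
in $\Coh^{\bar\beta}(\mathbb{P}^2)$, a one-point modification of the sequence $0\to\mathcal{O}(2)\to\mathcal{O}_C(2)\to\mathcal{O}(-1)[1]\to 0$ recalled in Example \ref{selfdualwall}. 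Applying $\iota_*$ and invoking Remark \ref{p2p3correspondence} transports this to
\[
0\to\mathcal{O}_H(2)\to E\to\mathcal{I}_{p,H}(-1)[1]\to 0
\]
in $\mathcal{A}^{\beta,\alpha}$ along the wall $\alpha_0=\sqrt{13/3}$, with $E=\iota_*\mathcal{F}$ Gieseker-semistable. Membership of $\mathcal{O}_H(2)$ and $\mathcal{I}_{p,H}(-1)[1]$ in $\mathcal{A}^{\beta,\alpha}$ would follow from Lemma \ref{lemma:familiaarobjects}, which places $\mathcal{O}_H(-1)[1]$ and the $E_{Z,k}$-type objects in the heart for $\beta\ge -1$. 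The hardest part is exactly this last step: verifying that the pushed-forward sequence remains a genuine short exact sequence in $\mathcal{A}^{\beta,\alpha}$ (all three terms in the heart, and the maps mono/epi there) along the whole wall, and that $\mathcal{F}$ is truly tilt-semistable so that the wall is actual rather than a pseudo-wall. The correspondence of Remark \ref{p2p3correspondence} and the heart-membership statements of Lemma \ref{lemma:familiaarobjects} are precisely what reduce this obstacle to tractable $\mathbb{P}^2$ tilt-stability facts.
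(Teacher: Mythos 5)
Your overall strategy coincides with the paper's (identify the maximal numerical wall via the finite search, then realize it by pushing forward a destabilizing sequence of plane sheaves via Remark \ref{p2p3correspondence}), and your reduction of the numerical search to $c\in\{\tfrac13,\tfrac23,1,\tfrac43\}$ using \eqref{num2} is a nice explicit version of what the paper simply delegates to the algorithm. However, there is a genuine error in the actuality step: the quotient of the destabilizing map is not $\mathcal{I}_{p,H}(-1)[1]$. For a Gieseker-semistable plane sheaf $\mathcal{F}$ with Hilbert polynomial $3t+7$, the space $\Hom(\mathcal{O}_H(2),\mathcal{F})=H^0(\mathcal{F}(-2))$ is one-dimensional and the corresponding map of sheaves has kernel $\mathcal{O}_H(-1)$ (the twisted ideal of the supporting cubic) and cokernel $\mathbb{C}_p$; hence its cone has $\mathcal{H}^{-1}=\mathcal{O}_H(-1)$ and $\mathcal{H}^0=\mathbb{C}_p$, i.e., it is the genuine two-term complex $E_{p,1}=R\mathcal{H}om(\mathcal{I}_{p,H}(1),\mathcal{O}_H)[1]$ of Lemma \ref{lemma:familiaarobjects}, not a shifted sheaf. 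A sheaf-level short exact sequence $0\to\mathcal{I}_{p,H}(-1)\to\mathcal{O}_H(2)\to\mathcal{F}\to 0$, which is what your triangle would require, cannot exist for semistable $\mathcal{F}$: the cokernel of any injection $\mathcal{I}_{p,H}(-1)\hookrightarrow\mathcal{O}_H(2)$ contains $\mathbb{C}_p$ as a subsheaf and is therefore not pure.

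This matters because the step you yourself flag as the hardest --- checking that all three terms lie in $\mathcal{A}^{\beta,\alpha}$ --- is exactly what Lemma \ref{lemma:familiaarobjects} supplies for the correct quotient (parts (4)--(5) place $\mathcal{O}_H(-k)[1]$, $\mathcal{O}_Z$, and hence the extension $E_{Z,k}$ in the heart for $\beta\ge -k$), whereas that lemma says nothing about $\mathcal{I}_{p,H}(-1)[1]$, so your citation of it does not close the gap. With the quotient corrected to $E_{p,1}$, the argument does close as in the paper: the sheaves fitting into $0\to\mathcal{O}_H(-1)\to\mathcal{O}_H(2)\to\mathcal{F}\to\mathbb{C}_p\to 0$ form a component of $\mathcal{M}_{3t+7}$, the triangle $\mathcal{O}_H(2)\to\mathcal{F}\to E_{p,1}\to\mathcal{O}_H(2)[1]$ becomes a short exact sequence in $\mathcal{A}^{1/3,\alpha}$, and $\ch(\mathcal{O}_H(2))=\left(0,1,\tfrac32,\tfrac76\right)$ produces the wall $\alpha_0^2=\tfrac{13}{3}$.
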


\begin{proof}
    After tuning the algorithm (see Appendix \ref{code}) with the condition of Lemma \ref{lemma5.2}, we obtain the ($\beta$-twisted) Chern characters of possible destabilizing subobjects: 
    $$
    \left(0,1,\frac{7}{6}, \frac{13}{18}\right),\ \left(0,1,\frac{13}{6}, \frac{7}{18}\right).
    $$
    The largest of these numerical walls is $\alpha_0^2=\frac{6e}{c}=\frac{13}{3}$ and is given by the (untwisted) Chern character $\ch=(0,1,\frac{3}{2},\frac{7}{6})$. If this wall is actual, this would be the boundary of the Gieseker chamber. If this wall is actual, a similar argument to the one used in the proof of Proposition \ref{prop:r0c1} implies that the destabilizing subobject must be a sheaf supported on a plane, then we can use Remark \ref{p2p3correspondence} to guess the possible destabilizing sequence. In fact, as explained in \cite{cubics2004}, there is a whole component of the moduli space $\mathcal{M}_{3t+7}$ consisting of 1-dimensional sheaves $\mathcal{F}$ fitting into exact sequences 
    \begin{equation}\label{longexactsequenceexample31}
    0\rightarrow \mathcal{O}_H(-1)\rightarrow \mathcal{O}_H(2)\rightarrow \mathcal{F}\rightarrow \mathbb{C}_p\rightarrow 0.
    \end{equation}
    These sheaves fit into the exact triangle:
    \begin{equation}\label{destabilizing31}
    \mathcal{O}_H(2)\rightarrow \mathcal{F}\rightarrow E_{p,1}\rightarrow \mathcal{O}_H(2)[1],
    \end{equation}
    where $E_{p,1}$ is as defined in Lemma \ref{lemma:familiaarobjects}. Moreover, since $1/3\geq -1$, then Lemma \ref{lemma:familiaarobjects} implies that $E_{p,k}\in \mathcal{A}^{1/3,\alpha}$ for all $\alpha>0$ and therefore the triangle \eqref{destabilizing31} gives us the short exact sequence
    $$
    0\rightarrow \mathcal{O}_H(2)\rightarrow \mathcal{F}\rightarrow E^{\bullet}\rightarrow 0
    $$
    in $\mathcal{A}_{1/3,\alpha}$. Since $\ch(\mathcal{O}_H(2))=(0,1,\frac{3}{2},\frac{7}{6})$, then the outermost numerical wall is actual, the boundary of the Gieseker chamber, and destabilizes precisely the sheaves given by \eqref{longexactsequenceexample31}.
\end{proof}
\begin{remark}
    The Gieseker moduli $\mathcal{M}_{3t+7}$ has two irreducible components, one consisting of the plane sheaves destabilized at the Gieseker wall and another consisting of twisted cubic curves, the intersection of the two components parametrizes singular plane cubics with an embedded point at the singularity (see \cite{cubics2004}). After crossing the Gieseker wall in Proposition \ref{prop5.3} no new stable objects are created since
    $$
    \Ext^1(\mathcal{O}_H(2),E^{\bullet})\cong \Hom(\mathcal{O}_H,\mathcal{I}_{p,H}(1)\otimes\mathcal{O}_H(2-3))^{\vee}=H^0(\mathbb{P}^2;\mathcal{I}_p)=0.
    $$
    However, the moduli space of Bridgeland semistable objects in the chamber adjacent to the Gieseker chamber is a proper algebraic space so there should be on this chamber semistable objects that are not sheaves. The authors do not know how these objects look like, but the existence of semistable objects that do not come from destabilized sheaves is a known phenomena (see for instance \cite[Example 6.6]{vertical2023}).
\end{remark}
\begin{remark}
    It is known that the moduli space $\mathcal{M}_{3t+2}$ is dual to $\mathcal{M}_{3t-2}\cong\mathcal{M}_{3t+7}$ and thus we can obtain the boundary for the Gieseker chamber in this case by dualizing (instead of looking at the case $\beta= \frac{2}{3}$). 
\end{remark}
\begin{remark}
One could ask what happens in the complementary region from Lemma \ref{lemma5.2}, this is, the region where walls (if any) destabilize stable complexes. In general, the number of numerical walls increases rapidly as we get near $\{s=0\}$, the Chern characters become unfamiliar as well, and our algorithm has difficulties computing them fast (note that the conditions when $k$ grows allow a huge amount of possibilities that should be checked).
\end{remark}
\subsection{Degree 4}
Using a process similar to the one above, we will also describe all numerical walls for the Chern character $\ch^\beta = (0,0,4,0)$ in the $(\alpha,s)$-slice. For $\beta = 0$ we get 
\begin{proposition}
\label{prop6.1}
    Let $\ch = (0,0,4,0)$. Then there are three actual walls $\alpha_0^2= 1,4,$ and $7$. 
\end{proposition}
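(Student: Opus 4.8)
The plan is to combine the existence results already in hand with a complete enumeration of numerical walls, exploiting the fact that for $D=4$ every numerical wall will turn out to be actual, so that counting numerical walls suffices. First I would invoke Proposition \ref{prop3.3} with $D=4$: this immediately yields the three actual walls $\alpha_0^2=1,4,7$ (the killing, elliptical and planar walls), so the real content of the statement is the \emph{absence} of any further wall. It therefore suffices to prove that $\{1,4,7\}$ exhausts the values of $\alpha_0^2$ admitting a numerical wall, since then the three actual walls are all of them.

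For the endpoints of the relevant range I would argue as follows. By the killing-wall property recalled in Proposition \ref{prop3.3}, there are no Bridgeland semistable objects of Chern character $(0,0,4,0)$ below $\alpha_0^2=1$, so no actual wall occurs for $\alpha_0^2<1$; and since $D=4$ is even, the lemma treating even degree (giving $\alpha_\infty^D=\alpha_\infty^{D-1}$ for even $D\ge 4$) together with the degree-three computation recalled above gives $\alpha_\infty^4=\alpha_\infty^3=\sqrt{7}$, so no numerical wall occurs for $\alpha_0^2>7$. The heart of the proof is then the enumeration in the range $1<\alpha_0^2\le 7$. Writing a destabilizing $\ch(A)=(r,c,d,e)$ with $c>0$ and $\alpha_0^2=6e/c$, condition \eqref{num2} gives $c\cdot(6e)\le\min\{4d^2,4(4-d)^2\}\le 16$. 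Since $\alpha_0^2>1$ forces $6e>c$, this produces the clean bound $c^2<c\cdot(6e)\le 16$, hence $c\in\{1,2,3\}$, and for each such $c$ the same inequality bounds $6e\le 16/c$, so the search is finite.

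I would then run through these three values using the integral conditions. Condition \eqref{int3} pins $6e$ modulo $6$ (namely $6e\equiv c$), while \eqref{int1} restricts the admissible $d$ and \eqref{num2} caps $6e$ from above; a short check shows $c=1$ admits only $6e=7$ (at $d=\tfrac32,\tfrac52$), giving $\alpha_0^2=7$, that $c=2$ admits only $6e=8$ (at $d=2$), giving $\alpha_0^2=4$, and that $c=3$ admits no value with $6e>3$, so contributes nothing beyond the killing wall. Condition \eqref{int2} is then verified directly on the two surviving characters. This shows $\alpha_0^2\in\{1,4,7\}$ are the only numerical walls, and as all three are actual by Proposition \ref{prop3.3}, there are exactly three actual walls. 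The main obstacle is guaranteeing completeness of this finite search, i.e. tracking the simultaneous constraints imposed by \eqref{num2} and \eqref{int1}--\eqref{int3} without missing an intermediate value; this is exactly what the Python routine of the Appendix automates, while the \emph{a priori} bound $c\le 3$ is what makes the verification tractable by hand.
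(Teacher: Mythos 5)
Your proposal is correct and follows essentially the same route as the paper: the paper also obtains the three walls from Proposition \ref{prop3.3} and then verifies via an exhaustive search over the conditions \eqref{num1}--\eqref{num3} and \eqref{int1}--\eqref{int3} (delegated to the Python algorithm, with the output recorded in Table \ref{table:2}) that $\alpha_0^2\in\{1,4,7\}$ are the only numerical walls. The only difference is that you replace the computer enumeration with a correct hand verification based on the a priori bound $c^2<c(6e)\leq 16$, which makes that step self-contained.
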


\begin{proof}
    We use the algorithm to compute all numerical walls (see Table \ref{table:2}). The values correspond exactly to the planar, the elliptical and the killing wall as discussed in Proposition \ref{prop3.3}. 
\end{proof}

\begin{lemma}\label{regionforbeta1:4}
    Let $\beta=\frac{1}{4}$ and $\ch^\beta = (0,0,4,0)$. Any wall in the $(\alpha,s)$-slice where a Gieseker semistable sheaf is destabilized must lie in the region
    \[
    \bigg\{
    (\alpha,s) \, \bigg| \, \left( s+\frac{1}{6} \right)\alpha^2 > \frac{3}{32}
    \bigg\}
    \]
    In particular, $\alpha_0 \geq 1-\beta = \frac{3}{4}$. 
\end{lemma}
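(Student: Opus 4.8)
The plan is to mirror the argument used for $\beta=\frac13$ in Lemma~\ref{lemma5.2}, with $\mathcal{O}_{\mathbb{P}^3}(1)$ again playing the role of the universal destabilizing subobject. First I would untwist the Chern character: since $\ch^{\beta}=e^{-\beta H}\ch$, we have $\ch=e^{\beta H}\ch^{\beta}=e^{\frac14 H}\cdot 4H^2=4H^2+H^3$, so $\ch(\mathcal{F})=(0,0,4,1)$. By the Hilbert-polynomial computation $\chi(\mathcal{F}(n))=Dn+(2D+\ch_3)$ this gives $\chi(\mathcal{F}(n))=4n+9$, so by \cite[Proposition~5.3]{vertical2023} the asymptotic ($\alpha\gg0$) Bridgeland moduli space of this character is the Gieseker moduli space $\mathcal{M}_{4t+9}$, exactly as $\beta=\frac13$ produced $\mathcal{M}_{3t+7}$.

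Next I would locate the wall produced by $\mathcal{O}_{\mathbb{P}^3}(1)$. Since $\ch(\mathcal{O}(1))=e^{H}$, its twist is $\ch^{\beta}(\mathcal{O}(1))=e^{(1-\beta)H}=e^{\frac34 H}=\bigl(1,\tfrac34,\tfrac{9}{32},\tfrac{9}{128}\bigr)$. By the description of numerical walls in Section~\ref{sec:prelim}, the wall generated by a subobject $A$ with $\ch^{\beta}(A)=(r,c,d,e)$ obeys $\bigl(s+\tfrac16\bigr)\alpha^2=\tfrac{\ch_3^{\beta}(A)}{\ch_1^{\beta}(A)}$; for $A=\mathcal{O}(1)$ this reads
\[
\Bigl(s+\tfrac16\Bigr)\alpha^2=\frac{9/128}{3/4}=\frac{3}{32},
\]
i.e. $\alpha_0^2=6\cdot\tfrac{3}{32}=\tfrac{9}{16}$, so $\alpha_0=\tfrac34=1-\beta$, which is precisely the boundary of the claimed region. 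Moreover, in the entire region $\bigl(s+\tfrac16\bigr)\alpha^2<\tfrac{3}{32}$ one has $\tfrac16\alpha^2\le\bigl(s+\tfrac16\bigr)\alpha^2<\tfrac{3}{32}$, hence $\alpha^2<\tfrac{9}{16}$; this guarantees $\ch_2^{\beta}(\mathcal{O}(1))-\tfrac{\alpha^2}{2}\ch_0^{\beta}(\mathcal{O}(1))=\tfrac{9}{32}-\tfrac{\alpha^2}{2}>0$, so $\mathcal{O}(1)\in\mathcal{T}_{\beta,\alpha}\subset\mathcal{A}^{1/4,\alpha}$ throughout this region.

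With these two facts I would run the destabilization argument. Following the cubic case, I would extract from the classification of $\mathcal{M}_{4t+9}$ in \cite{quartics2016} that $\mathcal{O}_{\mathbb{P}^3}(1)$ appears so that the induced morphism $\mathcal{O}_{\mathbb{P}^3}(1)\to\mathcal{F}$ is a monomorphism in the tilted heart $\mathcal{A}^{1/4,\alpha}$ for every Gieseker semistable $\mathcal{F}\in\mathcal{M}_{4t+9}$. A slope comparison then finishes the proof: for $\mathcal{F}$ with $\ch^{\beta}=(0,0,4,0)$ both Chern numbers $\ch_1^{\beta},\ch_3^{\beta}$ vanish, so $\lambda_{\beta,\alpha,s}(\mathcal{F})=0$, while in the region $\bigl(s+\tfrac16\bigr)\alpha^2<\tfrac{3}{32}$ the numerator $\ch_3^{\beta}(\mathcal{O}(1))-\bigl(s+\tfrac16\bigr)\alpha^2\ch_1^{\beta}(\mathcal{O}(1))=\tfrac{9}{128}-\tfrac34\bigl(s+\tfrac16\bigr)\alpha^2>0$ and the denominator is positive by the previous paragraph, whence $\lambda_{\beta,\alpha,s}(\mathcal{O}(1))>0=\lambda_{\beta,\alpha,s}(\mathcal{F})$. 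Thus $\mathcal{O}(1)$ already destabilizes $\mathcal{F}$ below the wall $\tfrac{3}{32}$, so no Gieseker semistable sheaf is semistable there, and every wall destabilizing such a sheaf lies in $\bigl\{(\alpha,s):\bigl(s+\tfrac16\bigr)\alpha^2>\tfrac{3}{32}\bigr\}$, giving $\alpha_0\ge\tfrac34=1-\beta$.

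The main obstacle is the step invoking \cite{quartics2016}: I expect the numerical slope comparison to be routine once membership of $\mathcal{O}(1)$ in $\mathcal{A}^{1/4,\alpha}$ is established (as above), but extracting from the quartic classification that $\mathcal{O}_{\mathbb{P}^3}(1)$ genuinely embeds as a \emph{subobject} in the tilted heart—rather than merely admitting a nonzero map $\mathcal{O}(1)\to\mathcal{F}$—requires verifying the torsion-pair/cohomology conditions for the morphism to be monic in $\mathcal{A}^{1/4,\alpha}$, exactly as in the $\mathcal{M}_{3t+7}$ situation. One subtlety to watch is that the relevant resolution term and its injectivity must hold uniformly over all components of $\mathcal{M}_{4t+9}$; should some sheaves fail to contain $\mathcal{O}(1)$ outright, the fallback is to realize $\mathcal{O}(1)$ dually as a quotient of $\mathcal{F}$ in $\mathcal{A}^{1/4,\alpha}$ and argue with the corresponding inequality, which yields the same bound.
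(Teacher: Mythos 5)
Your proposal follows essentially the same route as the paper: the paper's proof is a one-line reduction to the $D=3$ case, citing \cite[Theorems 1.2 and 6.1]{quartics2016} to conclude that every sheaf in every component of $\mathcal{M}_{4t+9}$ admits $\mathcal{O}_{\mathbb{P}^3}(1)$ as a subobject in $\mathcal{A}^{1/4,\alpha}$, after which the wall of $\mathcal{O}(1)$ at $\left(s+\frac{1}{6}\right)\alpha^2=\frac{3}{32}$ and the slope comparison (which you carry out explicitly and correctly) give the bound. The step you flag as the main obstacle is exactly the one the paper disposes of by citation, so your account matches the intended argument.
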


\begin{proof}
    As for the case $\ch^{1/3}=(0,0,3,0)$, from \cite[Theorem 1.2 \& Theorem 6.1]{quartics2016} we get that every sheaf in any of the components of $\mathcal{M}_{4t+9}$ has $\mathcal{O}(1)$ as a subobject in $\mathcal{A}^{\beta,\alpha}$. 
\end{proof}
\noindent
Now, using our algorithm we can compute those walls. We present the outcomes in Table \ref{table:3}. Moreover, we obtain the following:

\begin{proposition}
\label{prop6.3}
    Let $\ch^\beta = (0,0,4,0)$. If $\beta=\frac{1}{4}$, then there are four pseudo-walls destabilizing Gieseker semistable sheaves, with at least the first two of them (in decreasing order of $\alpha_0$) being actual:
    \begin{enumerate}[label=\roman*.]
        \item\label{prop6.3-i}$\alpha_0=\dfrac{\sqrt{151}}{4}$, produced by the subobject $\mathcal{I}_{p,H}(3)$,
        \item\label{prop6.3-ii}$\alpha_0=\dfrac{\sqrt{79}}{4}$, produced by the subobject $\mathcal{O}_H(2)$.
    \end{enumerate}
    The other two numerical walls with $\alpha_0 \geq \frac{3}{4}$ are $\alpha_0 = \, \frac{\sqrt{55}}{4}$ and $\frac{\sqrt{31}}{4}$.
\end{proposition}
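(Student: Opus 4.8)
The plan is to split the argument into a numerical part and an actualness part, mirroring the treatment of the degree $3$ case in Proposition \ref{prop5.3}. For the numerical part I would feed the integral inequalities \eqref{eqnbeta1}, \eqref{eqnbeta2}, \eqref{eqnbeta3} together with the integrality conditions \eqref{eqibeta1}, \eqref{eqibeta2}, \eqref{eqibeta3} (specialized to $k=4$) into the algorithm of Appendix \ref{code}, restricting the search to the half-plane $\left(s+\tfrac16\right)\alpha^2>\tfrac{3}{32}$, i.e. $\alpha_0\geq\tfrac34$, provided by Lemma \ref{regionforbeta1:4}. The output is the finite list recorded in Table \ref{table:3}; reading off $\alpha_0^2=6e/c$ for each admissible $\beta$-twisted character $(0,c,d,e)$ yields exactly the four values $\tfrac{\sqrt{151}}{4},\tfrac{\sqrt{79}}{4},\tfrac{\sqrt{55}}{4},\tfrac{\sqrt{31}}{4}$, which settles the pseudo-wall count.

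To identify the two outermost characters with the claimed subobjects I would compute $\ch^{1/4}(\mathcal{I}_{p,H}(3))=\left(0,1,\tfrac94,\tfrac{151}{96}\right)$ and $\ch^{1/4}(\mathcal{O}_H(2))=\left(0,1,\tfrac54,\tfrac{79}{96}\right)$, whose values $6e/c$ are $\tfrac{151}{16}$ and $\tfrac{79}{16}$, matching the top two pseudo-walls. For actualness I would argue, as in Proposition \ref{prop:r0c1} and Proposition \ref{prop5.3}, that a subobject producing an actual wall in this region is forced to be a sheaf supported on a plane $H\cong\mathbb{P}^2$, so it suffices to exhibit honest destabilizing sequences, which I would guess by pushing forward tilt walls on $\mathbb{P}^2$ via Remark \ref{p2p3correspondence}. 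Here the relevant planar character is $w=(0,4,3)$ (so that $\iota_*w=(0,0,4,1)$), and the two outermost tilt walls for $w$ are cut out by the subobjects $\mathcal{I}_{p,\mathbb{P}^2}(3)$ and $\mathcal{O}_{\mathbb{P}^2}(2)$, whose pushforwards are $\mathcal{I}_{p,H}(3)$ and $\mathcal{O}_H(2)$.

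Concretely, I would produce the short exact sequences in $\mathcal{A}^{1/4,\alpha}$
\begin{gather*}
0\to \mathcal{I}_{p,H}(3)\to \mathcal{F}\to \mathcal{O}_H(-1)[1]\to 0,\\
0\to \mathcal{O}_H(2)\to \mathcal{F}'\to E_{W,2}\to 0,\quad \ell(W)=3,
\end{gather*}
obtained by rotating the triangles attached to the cokernel presentations $0\to\mathcal{O}_H(-1)\to\mathcal{I}_{p,H}(3)\to\mathcal{F}\to 0$ and $0\to\mathcal{O}_H(-2)\to\mathcal{O}_H(2)\to\mathcal{F}'\to\mathcal{O}_W\to 0$ on $H$. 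The quotients land in the heart by Lemma \ref{lemma:familiaarobjects}: part (4) gives $\mathcal{O}_H(-1)[1]\in\mathcal{A}^{1/4,\alpha}$ since $\tfrac14\geq -1$, and part (5) gives $E_{W,2}\in\mathcal{A}^{1/4,\alpha}$ since $\tfrac14\geq -2$; a Chern-character check confirms $\ch(E_{W,2})=\left(0,-1,\tfrac52,-\tfrac16\right)$, the complement of $\ch(\mathcal{O}_H(2))$ inside $(0,0,4,1)$. It then remains to certify that $\mathcal{F}$ and $\mathcal{F}'$ are genuine Gieseker semistable members of $\mathcal{M}_{4t+9}$; for this I would invoke the explicit component descriptions of \cite{quartics2016}, which exhibit families of plane quartic sheaves of exactly these two shapes, placing them in the planar locus $\mathcal{P}_4$.

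The main obstacle I anticipate is this last step: verifying that the two cokernel presentations actually yield nonempty families of \emph{semistable} sheaves lying in bona fide components of $\mathcal{M}_{4t+9}$, and that the rotated triangles are short exact in $\mathcal{A}^{1/4,\alpha}$ rather than merely numerically balanced, i.e. that both the chosen subobject and the complex quotient sit simultaneously in the heart with matching Bridgeland slope on the wall. This is where one must lean on \cite{quartics2016} for the geometry of $\mathcal{M}_{4t+9}$ and on Lemma \ref{lemma:familiaarobjects} for the heart membership; the two remaining numerical walls $\tfrac{\sqrt{55}}{4},\tfrac{\sqrt{31}}{4}$ are left as pseudo-walls precisely because producing such certified sequences for them appears harder.
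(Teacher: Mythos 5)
Your proposal is correct and follows essentially the same route as the paper: the algorithm restricted by Lemma \ref{regionforbeta1:4} produces the four pseudo-walls of Table \ref{table:3}, and the two outermost ones are certified actual by pushing forward the $\mathbb{P}^2$ tilt walls via Remark \ref{p2p3correspondence}, yielding exactly the sequences $0\to \mathcal{I}_{p,H}(3)\to \mathcal{F}\to \mathcal{O}_H(-1)[1]\to 0$ and $0\to \mathcal{O}_H(2)\to \mathcal{F}\to E_{Z,2}\to 0$ with heart membership supplied by Lemma \ref{lemma:familiaarobjects}. The only cosmetic difference is that the paper obtains the two shapes of plane quartic sheaves from the resolutions in \cite{drezetmaican11} rather than from \cite{quartics2016}, but the resulting presentations and triangles are the same.
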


\begin{proof}
    The four numerical walls are obtained using our algorithm tuned to the condition from Lemma \ref{regionforbeta1:4} (see Table \ref{table:3}). The wall \ref{prop6.3-i} comes from the twisted Chern character $\ch^\beta=\left(0,1,\dfrac{9}{4}, \dfrac{151}{96}\right)$ corresponding to $\ch=\left(0,1,\dfrac{5}{2}, \dfrac{13}{6}\right)$, while the wall \ref{prop6.3-ii} comes from the twisted Chern character $\ch^\beta = \left(0,1,\dfrac{5}{4}, \dfrac{79}{96}\right)$ corresponding to $\ch = \left(0,1,\dfrac{3}{2}, \dfrac{7}{6}\right) $. The elements in $\mathcal{M}_{4t+9}$ have a component of planar sheaves, so we might try to use Remark \ref{p2p3correspondence} again to induce Bridgeland walls from $\mathbb{P}^2$. In \cite{drezetmaican11}, it was shown that a one-dimensional Gieseker semistable plane sheaf with Hilbert polynomial $4t+9$ fits into one of the following exact sequences:
    \begin{eqnarray}
        0 \rightarrow\mathcal{O}_H (-1) \oplus \mathcal{O}_H (1) \rightarrow \mathcal{O}_H (2)^{\oplus 2} \rightarrow \mathcal{F} \rightarrow 0,\label{deg4closedloci} \\
        0\rightarrow (\mathcal{O}_H)^{\oplus 3}\rightarrow \mathcal{O}_H(1)^{\oplus 2}\oplus \mathcal{O}_H(2)\rightarrow \mathcal{F}\rightarrow 0.\label{deg4openloci}
    \end{eqnarray}
    The sheaves from \eqref{deg4closedloci} fit into triangles
    $$
    \mathcal{I}_{p,H}(3)\rightarrow \mathcal{F}\rightarrow \mathcal{O}_H(-1)[1]\rightarrow \mathcal{I}_{p,H}(3)[1], 
    $$
    while the sheaves from \eqref{deg4openloci} fit into triangles
    $$
    \mathcal{O}_H(2)\rightarrow \mathcal{F}\rightarrow E_{Z,2}\rightarrow \mathcal{O}_H(2)[1], 
    $$
    where $Z\subset H\subset \mathbb{P}^3$ is a 0-dimensional subscheme of length 3. As before, Lemma \ref{lemma:familiaarobjects} shows that these triangles produce short exact sequences in $\mathcal{A}^{\frac{1}{4},\alpha}$:
    \begin{eqnarray}
         0\rightarrow \mathcal{I}_{p,H}(3)\rightarrow \mathcal{F}\rightarrow \mathcal{O}_H(-1)[1]\rightarrow 0\\
    0\rightarrow \mathcal{O}_H(2)\rightarrow \mathcal{F}\rightarrow E_{Z,2}\rightarrow 0,
    \end{eqnarray}
    which are the destabilizing sequences responsible for the walls \ref{prop6.3-i} and \ref{prop6.3-ii}, respectively.
\end{proof}
\noindent
Modulo dualizing and twisting there are only three moduli spaces of 1-dimensional Gieseker semistable space sheaves of degree 4, namely $\mathcal{M}_{4t},\ \mathcal{M}_{4t+1},\  \mathcal{M}_{4t+2}$. The boundary of the Gieseker chambers for the first two cases were computed in Proposition \ref{prop6.1} and Proposition \ref{prop6.3}, respectively. For the remaining case we have the following:
\begin{proposition}
\label{prop6.4}
    Let $\ch^\beta = (0,0,4,0)$. If $\beta=\frac{1}{2}$, then there are seven numerical walls destabilizing Gieseker semistable sheaves, with at least the first three of them (in decreasing order of $\alpha_0$) being actual:
    \begin{enumerate}[label=\roman*.]
        \item\label{prop6.4-i}$\alpha_0=\dfrac{7}{2}$, produced by the subobject $\mathcal{O}_H(3)$,
        \vspace{0.1cm}
        \item\label{prop6.4-ii} $\alpha_0=\dfrac{5}{2}$, produced by subobjects $\mathcal{I}_{p,H} (3)$, and
        \vspace{0.1cm}
        \item\label{prop6.4-iii} $\alpha_0 = \dfrac{\sqrt{13}}{2}$, produced by the subobject $\mathcal{O}_H (2)$. 
    \end{enumerate}
    The four other numerical walls $\alpha_0 \geq \frac{1}{2}$ are $\alpha_0 = \, 1, \frac{1}{\sqrt{2}}, \, \sqrt{\frac{5}{14}}$ and $\frac{1}{2}$ . 
\end{proposition}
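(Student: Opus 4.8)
The plan is to run the same machinery established for Proposition~\ref{prop5.3} and Proposition~\ref{prop6.3}, now in the slice $\beta=\frac{1}{2}$ with $\ch^{\beta}=(0,0,4,0)$, which corresponds to the untwisted Chern character $\ch=(0,0,4,2)$ and hence to the Gieseker moduli space $\mathcal{M}_{4t+2}$ in the outermost chamber. First I would record, exactly as in Lemma~\ref{lemma5.2} and Lemma~\ref{regionforbeta1:4}, that every wall destabilizing a Gieseker semistable sheaf must lie in the region $\left(s+\frac{1}{6}\right)\alpha^2>\frac{(1-\beta)^2}{6}=\frac{1}{24}$, so that $\alpha_0\geq 1-\beta=\frac{1}{2}$; this follows from the fact that every sheaf in $\mathcal{M}_{4t+2}$ admits $\mathcal{O}(1)$ as a subobject in $\mathcal{A}^{\beta,\alpha}$ (using the resolutions from \cite{quartics2016}). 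Then I would feed the numerical conditions \eqref{eqnbeta1}, \eqref{eqnbeta2}, \eqref{eqnbeta3} together with the integral filters \eqref{eqibeta1}, \eqref{eqibeta2}, \eqref{eqibeta3} for $\beta=\frac{1}{2}$ into the Python algorithm (Appendix~\ref{code}), tuned to the cutoff $\alpha_0\geq\frac{1}{2}$, to enumerate the seven numerical walls and their destabilizing $\beta$-twisted Chern characters; this produces the claimed list $\alpha_0=\frac{7}{2},\frac{5}{2},\frac{\sqrt{13}}{2},1,\frac{1}{\sqrt{2}},\sqrt{\frac{5}{14}},\frac{1}{2}$.

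The heart of the proof is showing that the top three numerical walls are actual. For each one I would identify the candidate destabilizing subobject from its $\beta$-twisted Chern character and exhibit a genuine short exact sequence in $\mathcal{A}^{1/2,\alpha}$. The three subobjects $\mathcal{O}_H(3)$, $\mathcal{I}_{p,H}(3)$, and $\mathcal{O}_H(2)$ all have $\beta=\frac{1}{2}\geq -k$ for the relevant $k$, so Lemma~\ref{lemma:familiaarobjects} guarantees that the associated cone objects ($\mathcal{O}_H(-k)[1]$ and the complexes $E_{Z,k}$) lie in the heart $\mathcal{A}^{1/2,\alpha}$ for all $\alpha>0$. The concrete mechanism is the same as in Proposition~\ref{prop6.3}: I would invoke the explicit resolutions of plane sheaves with Hilbert polynomial $4t+2$ (via the Dr\'ezet--Maican classification on $\mathbb{P}^2$, analogous to \cite{drezetmaican11}) to write each relevant component of $\mathcal{M}_{4t+2}$ as sheaves $\mathcal{F}$ fitting into exact triangles whose sub and quotient have the desired Chern characters, then apply Lemma~\ref{lemma:familiaarobjects} to turn each triangle into a short exact sequence in $\mathcal{A}^{1/2,\alpha}$.

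I would structure the three cases in decreasing order of $\alpha_0$, matching the pattern $\mathcal{O}_H(3)$ (largest, the self-dual/planar-type wall analogous to $\mathcal{O}_H(\lceil D/2\rceil)$ in Example~\ref{selfdualwall}), then $\mathcal{I}_{p,H}(3)$, then $\mathcal{O}_H(2)$. For each, the verification splits into (a) checking that the proposed subobject's $\beta$-twisted Chern character matches the numerical wall value $\alpha_0^2=\frac{6e}{c}$ returned by the algorithm, and (b) producing the destabilizing sequence from a known resolution of a plane sheaf and confirming membership in the heart via Lemma~\ref{lemma:familiaarobjects}. The remaining four values $\alpha_0=1,\frac{1}{\sqrt{2}},\sqrt{\frac{5}{14}},\frac{1}{2}$ are then reported as numerical (pseudo-)walls without the actuality claim.

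The main obstacle I anticipate is twofold. First, one must have in hand the correct list of minimal resolutions of one-dimensional plane sheaves with Hilbert polynomial $4t+2$ on $\mathbb{P}^2$ and match their pushforwards (via Remark~\ref{p2p3correspondence}) to precisely the three candidate subobjects; unlike the $4t+9$ case of \cite{drezetmaican11} these resolutions must be located in the literature or computed, and the bookkeeping of twists under $\iota_*$ is delicate. Second, and more seriously, actuality for $\mathcal{O}_H(3)$ as the \emph{outermost} wall requires confirming that the destabilizing quotient genuinely lies in $\mathcal{A}^{1/2,\alpha}$ along the whole wall and that the corresponding sheaves $\mathcal{F}$ are actually Gieseker semistable members of $\mathcal{M}_{4t+2}$ rather than of a neighboring moduli space; as flagged in Example~\ref{selfdualwall}, establishing that such pushforward extensions remain Bridgeland semistable on the wall is the genuinely nontrivial point, and it is precisely why only the first three walls are asserted to be actual while the smaller four are left as numerical.
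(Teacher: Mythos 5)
Your proposal follows essentially the same route as the paper: the seven numerical walls are read off from the algorithm's output (Table \ref{table4}), and the top three are shown to be actual by pushing forward destabilizing sequences of plane sheaves from $\mathbb{P}^2$ and checking membership in $\mathcal{A}^{1/2,\alpha}$ via Lemma \ref{lemma:familiaarobjects}. The only substantive difference is the source of those sequences: rather than re-deriving Dr\'ezet--Maican-type resolutions for the relevant Hilbert polynomial, the paper cites the tilt-stability walls already worked out in \cite[Section 6.1]{BMW}, which give $0\to\mathcal{O}_H(3)\to\mathcal{F}\to\mathcal{O}_H(-1)[1]\to 0$, $0\to\mathcal{I}_{p,H}(3)\to\mathcal{F}\to E_{q,1}\to 0$ and $0\to\mathcal{O}_H(2)^{\oplus 2}\to\mathcal{F}\to\mathcal{O}_H^{\oplus 2}[1]\to 0$ (note that the last sequence has subobject $\mathcal{O}_H(2)^{\oplus 2}$, numerically on the same wall as the $\mathcal{O}_H(2)$ you propose).
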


\begin{proof}
    The full computations of the pseudo-walls obtained by the algorithm can be found in Table \ref{table4}. The first three walls are actual because again they come from the walls destabilizing plane sheaves with respect to tilt stability. These walls were studied in detail in \cite[Section 6.1]{BMW} and the corresponding destabilizing sequences are
    \begin{eqnarray}
        0\rightarrow \mathcal{O}_H(3)\rightarrow \mathcal{F}\rightarrow \mathcal{O}_H(-1)[1]\rightarrow 0\\
        0\rightarrow \mathcal{I}_{p,H}(3)\rightarrow \mathcal{F}\rightarrow E_{q,1}\rightarrow 0\\
        0\rightarrow \mathcal{O}_H(2)^{\oplus 2}\rightarrow \mathcal{F}\rightarrow {\mathcal{O}_H}^{\oplus 2}[1]\rightarrow 0,
    \end{eqnarray}
    which are all exact in $\mathcal{A}^{1/2,\alpha}$ by Lemma \ref{lemma:familiaarobjects}.
\end{proof}

\begin{remark}
    Note that in all the examples in this section, the first sheaves that are destabilized are plane sheaves and in particular the boundary of the Gieseker chamber coincides with the tilt wall corresponding to the boundary of the nef cone of the moduli space $\mathcal{M}_{p(t)}(\mathbb{P}^2)$ as computed in \cite{Woolf}. 
\end{remark}

\renewcommand\proofname{Proof of Theorem \ref{thm1.2}}
\begin{proof}
This comes from the adjunction of Lemma \ref{lemma5.2} and Proposition \ref{prop5.3} for $D=3$, and Proposition \ref{prop6.1}, Proposition \ref{prop6.3} and Proposition \ref{prop6.4} for $D=4$.

\end{proof}


\newpage
\appendix
\section{Code}\label{code}
In this \href{https://github.com/daniel-bernalm/walls_computations}{link}, we present the code used to find the numerical walls that appear in Section 4. If $A$ is a destabilizing object of $E$ with Chern character $\ch^{\beta} = (-R, 0 ,D, 0)$ with $\beta= \frac{1}{k}$ or $0$ and $\ch^\beta A = (r,c,d,e)$, then the algorithm computes all possibilities of such Chern character using the integral conditions \ref{eqnbeta1}, \ref{eqnbeta2} and \ref{eqnbeta3} and the numerical conditions \ref{eqibeta1}, \ref{eqibeta2} and \ref{eqibeta3}. In order to improve the speed for computing the cases $D=3$ with $\beta=\frac{1}{3}$ and $D=4$ with $\beta = \frac{1}{4}$, the algorithm relies on the multiprocessing package from Python, and thus the program would work better on a physical computer with at least 4 cores. 

\vspace{0.3cm}
\noindent
In this case, we define an object by calling its class \begin{verbatim}
    a=Sheaf(R,D,k)
\end{verbatim} where $\beta=\frac{1}{k}$. If $k=1$ then the program assumes $\beta=0$. After it you use the method 
\begin{verbatim}
    a.num_dest(d)
\end{verbatim}
where the argument counts the number for which you want the process to be divided. This is used for the multiprocessing aspect, each process analyzes simultaneously $\frac{1}{d}$ of the list. The output is then a list of numbers of the form $\left([r_i]_{i=1}^l,c,d,e\right)$ where the $r_i$ are the possible ranks, and $c,d,e$ are the Chern characters of the destabilizing object generating the pseudo-wall. 

\vspace{0.3cm}
\noindent
For example, in the known case $R=0$, $D=3$ and $\beta=0$ studied in \cite[Example 6.15]{vertical2023} the program computes:
\begin{lstlisting}[breaklines]
    [([-5, -4, -3, -2, -1, 0, 1], Fraction(1, 1), Fraction(1, 2), Fraction(1, 6)), ([-4, -3, -2, -1, 0, 1, 2], Fraction(2, 1), Fraction(1, 1), Fraction(1, 3)), ([-3, -2, -1, 0, 1, 2, 3], Fraction(1, 1), Fraction(3, 2), Fraction(1, 6)), ([0], Fraction(1, 1), Fraction(3, 2), Fraction(7, 6)), ([-3, -2, -1, 0, 1, 2, 3], Fraction(3, 1), Fraction(3, 2), Fraction(1, 2)), ([-21, -20, -19, -18, -17, -16, -15, -14, -13, -12, -11, -10, -9, -8, -7, -6, -5, -4, -3, -2, -1, 0, 1, 2, 3, 4, 5, 6, 7, 8, 9, 10, 11, 12, 13, 14, 15, 16, 17, 18, 19, 20, 21], Fraction(7, 1), Fraction(3, 2), Fraction(1, 6)), ([-2, -1, 0, 1, 2, 3, 4], Fraction(2, 1), Fraction(2, 1), Fraction(1, 3)), ([-1, 0, 1, 2, 3, 4, 5], Fraction(1, 1), Fraction(5, 2), Fraction(1, 6))]
\end{lstlisting}
Coinciding with the results obtained before. 

\newpage
\section{Numerical computations}\label{appendixB}
Here we present all the results obtained by the algorithm for the cases $D=4$ and $\beta=0, \, \frac{1}{4}$ and $\frac{1}{2}$. 

\begin{table}[h]
\caption{Numerical walls for sheaves with $\ch^\beta=(0,0,4,0)$ and $\beta=\frac{1}{4}$.}\label{table:3}
\begin{tabular}{ccc}
\hline
$\ch^\beta(A)$                        & $\alpha_0$ & $\ch(A)$  \\[0.1cm] \hline \\[-0.4cm]
$\left(0,1,\dfrac{9}{4},\dfrac{151}{96}\right)$ & $\dfrac{\sqrt{151}}{4} \cong 3.0721 $ & $\left(0,1,\dfrac{5}{2},\dfrac{13}{6}\right)$           \\[0.3cm]
$\left(0,1,\dfrac{5}{4},\dfrac{79}{96}\right)$ & $\dfrac{\sqrt{79}}{4}\cong 2.2220 $ &  $\left(0,1,\dfrac{3}{2},\dfrac{7}{6}\right)$          \\[0.3cm]
$\left(0,1,\dfrac{9}{4},\dfrac{55}{96}\right)$ & $\dfrac{\sqrt{55}}{4} \cong 1.8540 $  &  $\left(0,1,\dfrac{5}{2},\dfrac{7}{6}\right)$          \\[0.3cm]
$\left(0,2,\dfrac{5}{2},\dfrac{31}{48}\right)$ & $\dfrac{\sqrt{31}}{4} \cong 1.3919 $ &  $\left(0,2,3,\dfrac{4}{3}\right)$          \\[0.3cm]
$\left(0,1,\dfrac{13}{4},\dfrac{31}{96}\right)$ & $\dfrac{\sqrt{31}}{4} \cong 1.3919 $ &  $\left(0,1,\dfrac{7}{2},\dfrac{7}{6}\right)$        \\[0.3cm] \hline
\end{tabular}
\end{table}

\newpage

\begin{table}[h]
\caption{Numerical walls for $(0,0,4,0)$ with $\beta = 0$ and possible destabilizing sequences}\label{table:2}
\begin{tabular}{ccc}
\hline
$\ch(A)$                        & $\alpha_0$ & Possible Destabilizing Sequence \\ \hline
$\left(r,1,\dfrac{3}{2},\dfrac{7}{6}\right)$& $\sqrt{7}$ &  $$
\begin{tikzcd}[column sep=small,ampersand replacement=\&]
    0 \arrow{r} \& \mathcal{O}_H (2) \arrow{r}{\widehat{\sigma}} \& j_{*}L \arrow{r}  \& 
 A:= C(\widehat{\sigma}) \arrow{r} \& 0 
\end{tikzcd}
$$ \\[0.2cm]
$\left(r,1,\dfrac{5}{2},\dfrac{7}{6}\right)$                                &   $\sqrt{7}$         &     $$
\begin{tikzcd}[column sep=small,ampersand replacement=\&]
    0 \arrow{r} \& A^{\vee}[2] \arrow{r} \& j_{*}L^{\vee}[2] \arrow{r}  \& \mathcal{O}_H(-2)[2] \arrow{r} \& 0 
\end{tikzcd}
$$ if $E\in\Ext^1 (\mathcal{O}_H(2),A)\neq 0$   \\[0.2cm]
$\left(r,2,2,\dfrac{4}{3}\right)$                                &   2         & $$
\begin{tikzcd}[column sep=small,ampersand replacement=\&]
    0 \arrow{r} \& \mathcal{O}(2) \arrow{r}{\sigma} \& \mathcal{O}_E (2) \arrow{r} \& I_E(2) [1] \arrow{r} \& 0
\end{tikzcd}
$$                     \\[0.2cm]
$\left(r,1,\dfrac{1}{2},\dfrac{1}{6}\right)$ & 1          & $$
\begin{tikzcd}[column sep=small]
0 \arrow{r} & \mathcal{O}(1) \arrow{r} & E \arrow{r} & \mathcal{R}_1[1] \arrow{r} & 0
\end{tikzcd}
$$\\[0.2cm]
$\left(r,2,1,\dfrac{1}{3}\right)$& 1           & $$
\begin{tikzcd}[column sep=small]
0 \arrow{r} & 2\cdot\mathcal{O}(1) \arrow{r} & E \arrow{r} & \mathcal{R}_2[1] \arrow{r} & 0
\end{tikzcd}
$$\\[0.2cm]
$\left(r,3,\dfrac{3}{2},\dfrac{1}{2}\right)$ &    1        & $$
\begin{tikzcd}[column sep=small]
0 \arrow{r} & 3\cdot\mathcal{O}(1) \arrow{r} & E \arrow{r} & \mathcal{R}_3[1] \arrow{r} & 0
\end{tikzcd}
$$\\[0.2cm]
$\left(r,1,\dfrac{3}{2},\dfrac{1}{6}\right)$ &   1   & $$
\begin{tikzcd}[column sep=small,ampersand replacement=\&]
    0 \arrow{r} \& \mathcal{I}_{p,H} (2) \arrow{r} \& E \arrow{r}  \& 
    U \arrow{r} \& 0 
\end{tikzcd}
$$ if $E\in\Ext^1 (\mathcal{O}_H(2),A)\neq 0$ \\[0.2cm]
$\left(r,2,2,\dfrac{1}{3}\right)$ & 1           &  $$
\begin{tikzcd}[column sep=small,ampersand replacement=\&]
    0 \arrow{r} \& \mathcal{I}_{p} (2) \arrow{r} \& E_1 \arrow{r}  \& 
    U_1 \arrow{r} \& 0 
\end{tikzcd}
$$ if $E_1\in\Ext^1 (\mathcal{O}_p,I_E (2) [1])\neq 0$                           \\
$\left(r,4,2,\dfrac{2}{3}\right)$                                & 1           & \makecell{ $$
\begin{tikzcd}[column sep=small,ampersand replacement=\&]
0 \arrow{r} \& 4\cdot\mathcal{O}(1) \arrow{r} \& E \arrow{r} \& \mathcal{R}_4[1] \arrow{r} \& 0
\end{tikzcd}
$$ \\ $$
\begin{tikzcd}[column sep=small,ampersand replacement=\&]
0 \arrow{r} \& \mathcal{R}^{\vee}_4[1]  \arrow{r} \& E^{\vee}[2] \arrow{r} \& 4\cdot\mathcal{O}(-1)[2] \arrow{r} \& 0
\end{tikzcd}
$$
}   \\[0.4cm]
$\left(r,1,\dfrac{5}{2},\dfrac{1}{6}\right)$                               &  1          & $$
\begin{tikzcd}[column sep=small,ampersand replacement=\&]
    0 \arrow{r} \& U^{\vee}[2]  \arrow{r} \& E^{\vee}[2] \arrow{r}  \& \mathcal{O}_{p,H}(-2)[1] \& 0 
\end{tikzcd}
$$                            \\
$\left(r,3,\dfrac{5}{2},\dfrac{1}{2}\right)$                                &  1          &   $$
\begin{tikzcd}[column sep=small,ampersand replacement=\&]
0 \arrow{r} \& \mathcal{R}^{\vee}_3[1]  \arrow{r} \& E^{\vee}[2] \arrow{r} \& 3\cdot\mathcal{O}(-1)[2] \arrow{r} \& 0
\end{tikzcd}
$$                          \\
$\left(r,2,3,\dfrac{1}{3}\right)$                                &   1         &   $$
\begin{tikzcd}[column sep=small,ampersand replacement=\&]
0 \arrow{r} \& \mathcal{R}^{\vee}_2[1]  \arrow{r} \& E^{\vee}[2] \arrow{r} \& 2\cdot\mathcal{O}(-1)[2] \arrow{r} \& 0
\end{tikzcd}
$$                          \\
$\left(r,1,\dfrac{7}{2},\dfrac{1}{6}\right)$                                &   1         &   $$
\begin{tikzcd}[column sep=small,ampersand replacement=\&]
0 \arrow{r} \& \mathcal{R}^{\vee}_1[1]  \arrow{r} \& E^{\vee}[2] \arrow{r} \& \mathcal{O}(-1)[2] \arrow{r} \& 0
\end{tikzcd}
$$                          \\ \hline
\end{tabular}
\end{table}

\newpage

\begin{longtable}[c]{ W{c}{.30\textwidth} W{c}{.15\textwidth}}
\caption{Numerical walls $\alpha_0\geq\frac{1}{2}$ for $\ch^\beta=(0,0,4,0)$ with $\beta=\frac{1}{2}$.}\\
\hline
$\ch^\beta(A)$ & $\alpha_0^2$  \\ \hline
$\left( \left[0\right], 1 , 2, \dfrac{49}{24} \right)$ & $\dfrac{49}{4}$  \\[0.3cm]
$\left( \left[0\right], 1 , 2, \dfrac{25}{24} \right)$ & $\dfrac{25}{4}$  \\[0.3cm]
$\left( \left[0\right], 1 , \dfrac{1}{1}, \dfrac{13}{24} \right)$ & $\dfrac{13}{4}$  \\[0.3cm]
$\left( \left[0\right], 2 , 2, \dfrac{13}{12} \right)$ & $\dfrac{13}{4}$  \\[0.3cm]
$\left( \left[0\right], 1 , 3, \dfrac{13}{24} \right)$ & $\dfrac{13}{4}$  \\[0.3cm]
$\left( \left[2\right], \dfrac{5}{2} , \dfrac{13}{8}, \dfrac{5}{12} \right)$ & $1$  \\[0.3cm]
$\left( \left[-10\right], \dfrac{3}{2} , \dfrac{5}{8}, \dfrac{1}{8} \right)$ & $\dfrac{1}{2}$ \\[0.3cm]
$\left( \left[-2\right], \dfrac{7}{2} , \dfrac{13}{8}, \dfrac{5}{24} \right)$ & $\dfrac{5}{14}$  \\[0.3cm]
$\left( \left[-28,-12,4\right], 1 , \dfrac{1}{2}, \dfrac{1}{24} \right)$ & $\dfrac{1}{4}$  \\[0.3cm]
$\left( \left[-28,-12,4\right], 2 , \dfrac{1}{2}, \dfrac{1}{12} \right)$ & $\dfrac{1}{4}$  \\[0.3cm]
$\left( \left[-26,-10,6\right], 2 , \dfrac{3}{4}, \dfrac{1}{12} \right)$ & $\dfrac{1}{4}$  \\[0.3cm]
$\left( \left[-24,-8,8\right], 1 , 1, \dfrac{1}{24} \right)$ & $\dfrac{1}{4}$  \\[0.3cm]
$\left( \left[-24,-8,8\right], 2 , 1, \dfrac{1}{12} \right)$ & $\dfrac{1}{4}$   \\[0.3cm]
$\left( \left[-24,-8,8\right], 3 , 1, \dfrac{1}{8} \right)$ & $\dfrac{1}{4}$  \\[0.3cm]
$\left( \left[-24,-8,8\right], 4 , 1, \dfrac{1}{6} \right)$ & $\dfrac{1}{4}$  \\[0.3cm]
$\left( \left[-22,-6,10\right], 2 , \dfrac{5}{4}, \dfrac{1}{12} \right)$ & $\dfrac{1}{4}$  \\[0.3cm]
$\left( \left[-22,-6,10\right], 4 , \dfrac{5}{4}, \dfrac{1}{6} \right)$ & $\dfrac{1}{4}$  \\[0.3cm]
$\left( \left[-20,-4,12\right], 1 , \dfrac{3}{2}, \dfrac{1}{24} \right)$ & $\dfrac{1}{4}$ \\[0.3cm]
$\left( \left[-20,-4,12\right], 2 , \dfrac{3}{2}, \dfrac{1}{12} \right)$ & $\dfrac{1}{4}$  \\[0.3cm]
$\left( \left[-20,-4,12\right], 3 , \dfrac{3}{2}, \dfrac{1}{8} \right)$ & $\dfrac{1}{4}$  \\[0.3cm]
$\left( \left[-20,-4,12\right], 4 , \dfrac{3}{2}, \dfrac{1}{6} \right)$ & $\dfrac{1}{4}$  \\[0.3cm]
$\left( \left[-20,-4,12\right], 5 , \dfrac{3}{2}, \dfrac{5}{24} \right)$ & $\dfrac{1}{4}$  \\[0.3cm]
$\left( \left[-20,-4,12\right], 6 , \dfrac{3}{2}, \dfrac{1}{4} \right)$ & $\dfrac{1}{4}$  \\[0.3cm]
$\left( \left[-18,-2,14\right], 2 , \dfrac{7}{4}, \dfrac{1}{12} \right)$ & $\dfrac{1}{4}$  \\[0.3cm]
$\left( \left[-18,-2,14\right], 4 , \dfrac{7}{4}, \dfrac{1}{6} \right)$ & $\dfrac{1}{4}$  \\[0.3cm]
$\left( \left[-18,-2,14\right], 6 , \dfrac{7}{4}, \dfrac{1}{4} \right)$ & $\dfrac{1}{4}$  \\[0.3cm]
$\left( \left[-16,0,16\right], 1 , 2, \dfrac{1}{24} \right)$ & $\dfrac{1}{4}$  \\[0.3cm]
$\left( \left[-16,0,16\right], 2 , 2, \dfrac{1}{12} \right)$ & $\dfrac{1}{4}$   \\[0.3cm]
$\left( \left[-16,0,16\right], 3 , 2, \dfrac{1}{8} \right)$ & $\dfrac{1}{4}$  \\[0.3cm]
$\left( \left[-16,0,16\right], 4 , 2, \dfrac{1}{6} \right)$ & $\dfrac{1}{4}$  \\[0.3cm]
$\left( \left[-16,0,16\right], 5 , 2, \dfrac{5}{24} \right)$ & $\dfrac{1}{4}$  \\[0.3cm]
$\left( \left[-16,0,16\right], 6 , 2, \dfrac{1}{4} \right)$ & $\dfrac{1}{4}$  \\[0.3cm]
$\left( \left[-16,0,16\right], 7 , 2, \dfrac{7}{24} \right)$ & $\dfrac{1}{4}$  \\[0.3cm]
$\left( \left[-16,0,16\right], 8 , 2, \dfrac{1}{3} \right)$ & $\dfrac{1}{4}$  \\[0.3cm]
$\left( \left[-14,2,18\right], 2 , \dfrac{9}{4}, \dfrac{1}{12} \right)$ & $\dfrac{1}{4}$  \\[0.3cm]
$\left( \left[-14,2,18\right], 4 , \dfrac{9}{4}, \dfrac{1}{6} \right)$ & $\dfrac{1}{4}$  \\[0.3cm]
$\left( \left[-14,2,18\right], 6 , \dfrac{9}{4}, \dfrac{1}{4} \right)$ & $\dfrac{1}{4}$  \\[0.3cm]
$\left( \left[-12,4,20\right], 1 , \dfrac{5}{2}, \dfrac{1}{24} \right)$ & $\dfrac{1}{4}$  \\[0.3cm]
$\left( \left[-12,4,20\right], 2 , \dfrac{5}{2}, \dfrac{1}{12} \right)$ & $\dfrac{1}{4}$  \\[0.3cm]
$\left( \left[-12,4,20\right], 3 , \dfrac{5}{2}, \dfrac{1}{8} \right)$ & $\dfrac{1}{4}$  \\[0.3cm]
$\left( \left[-12,4,20\right], 4 , \dfrac{5}{2}, \dfrac{1}{6} \right)$ & $\dfrac{1}{4}$  \\[0.3cm]
$\left( \left[-12,4,20\right], 5 , \dfrac{5}{2}, \dfrac{5}{24} \right)$ & $\dfrac{1}{4}$  \\[0.3cm]
$\left( \left[-12,4,20\right], 6 , \dfrac{5}{2}, \dfrac{1}{4} \right)$ & $\dfrac{1}{4}$  \\[0.3cm]
$\left( \left[6\right], \dfrac{3}{2} , \dfrac{21}{8}, \dfrac{1}{8} \right)$ & $\dfrac{1}{4}$  \\[0.3cm]
$\left( \left[-10,6,22\right], 2 , \dfrac{11}{4}, \dfrac{1}{12} \right)$ & $\dfrac{1}{4}$  \\[0.3cm]
$\left( \left[-10,6,22\right], 4 , \dfrac{11}{4}, \dfrac{1}{6} \right)$ & $\dfrac{1}{4}$  \\[0.3cm]
$\left( \left[-8,8,24\right], 1 , 3, \dfrac{1}{24} \right)$ & $\dfrac{1}{4}$  \\[0.3cm]
$\left( \left[-8,8,24\right], 2 , 3, \dfrac{1}{12} \right)$ & $\dfrac{1}{4}$  \\[0.3cm]
$\left( \left[-8,8,24\right], 3 , 3, \dfrac{1}{8} \right)$ & $\dfrac{1}{4}$  \\[0.3cm]
$\left( \left[-8,8,24\right], 4 , 3, \dfrac{1}{6} \right)$ & $\dfrac{1}{4}$  \\[0.3cm]
$\left( \left[-6,10,26\right], 2 , \dfrac{13}{4}, \dfrac{1}{12} \right)$ & $\dfrac{1}{4}$  \\[0.3cm]
$\left( \left[-4,12,28\right], 1 , \dfrac{7}{2}, \dfrac{1}{24} \right)$ & $\dfrac{1}{4}$  \\[0.3cm]
$\left( \left[-4,12,28\right], 2 , \dfrac{7}{2}, \dfrac{1}{12} \right)$ & $\dfrac{1}{4}$  \\[0.3cm]
\hline
\label{table4}
\end{longtable}

\end{document}